\DeclareMathAlphabet{\mathpzc}{OT1}{pzc}{m}{it}
\newtheorem*{Thm}{Theorem}
\newtheorem{theorem}{\bf Theorem}
\newtheorem{lemma}[theorem]{\bf Lemma}
\newtheorem{propos}[theorem]{\bf Proposition}
\newtheorem{claim}[theorem]{\bf Claim}
\theoremstyle{definition}
\newtheorem{defi}[theorem]{\bf Definition}
\newtheorem{oss}[theorem]{\bf Remark}
\newtheorem{notet}[theorem]{\bf Notation}
\newtheorem*{conj}{Conjecture}
\newtheorem*{thma}{Theorem 1}
\newtheorem*{thmb}{Theorem 2}
\newcommand{\lcm}{\operatorname{lcm}}
\newcommand{\N}{\mathbb N}
\newcommand{\A}{\mathbb A}
\newcommand{\Z}{\mathbb Z}
\newcommand{\Hht}{\operatorname{ht}}
\newcommand{\image}{\operatorname{Im}}
\newcommand{\kernel}{\operatorname{Ker}}
\begin{document}

\title{On the Periodicity of the First Betti Number of the Semigroup Rings under Translations}

\author{Adriano Marzullo}
\address{Department of Mathematics, Becker College, Worcester
Massachusets, 01609}
\email{adriano.marzullo@becker.edu}

\date{September, 14 2012}

\keywords{Semigroup Rings and Betti Numbers }

\begin{abstract}

 Let $k$ be a field of characteristic zero.
 Given an ordered $3$-tuple of positive integers $\mathbf{a}=(a,b,c)$ and for $j\in\N_{\geq 1}$, a family of sequences $\mathbf{\underline{a}}_j = \left(j,\;a+j,\;a+b+j,\;a+b+c+j\right)$, we consider the collection of monomial curves in $\A^{4}$ associated with $\mathbf{\underline{a}}_j$.
  The Betti numbers of the Semigroup rings collection associated with $\mathbf{\underline{a}}_j$ are conjectured to be eventually periodic with period $a+b+c$ by  Herzog and Srinivasan. Let $p\in\N$, in this paper, we prove that for $\mathbf{a}=(p(b+c),b,c)$ or $\mathbf{a}=(a,b,p(a+b))$ in the collection of defining ideals associated with $\mathbf{\underline{a}}_j$, for large $j$ the ideals are complete intersections if and only if $(a+b+c)| j$. Moreover, the complete intersections are periodic with the conjectured period.
\footnotemark
\end{abstract}
\maketitle
\vspace{2mm}
\footnote{This is the author's Ph.D. thesis, under the direction of Professor Hema Srinivasan. The author sincerely thanks Professor Srinivasan for suggesting the problem and for many helpful discussions concerning the material in this paper.}

\section*{Introduction}
\vspace{3mm}

  Let $k$ be a field of characteristic zero. Let $\N$ denote the nonnegative integers. Consider the finite sequence
$\;\mathbf{a}=\left(\textsf{a}_1,\textsf{a}_2,\ldots,\textsf{a}_{n}\right)\in\N^{n}$. The numerical semigroup generated by $\;\mathbf{a}$ is given by $\; S_{\mathbf{a}}=\textsf{a}_1\N+\textsf{a}_2\N+\cdots +\textsf{a}_{n}\N=\left\langle \textsf{a}_1, \textsf{a}_2, \ldots \textsf{a}_{n} \right\rangle$.

Consider the monomial curve $ \Gamma_{\mathbf{a}}=\left\{\left(t^{\textsf{a}_1},t^{\textsf{a}_2},\ldots, t^{\textsf{a}_{n}}\right)\in \A^{n}_k \;: t\in k\right\}$, then the Monomial Prime Ideal or the defining Ideal of $ \Gamma_{\mathbf{a}}$ is given by 
$$ P_{\mathbf{a}}=P(\textsf{a}_1,\textsf{a}_2,\ldots,\textsf{a}_{n})=\kernel{\varphi}$$
where $\varphi$ is the homomorphism of $k$-algebras:
$$\varphi\;:\;R=k\left[
x_1,x_2,\ldots,x_{n}\right]\longrightarrow k[t]\;\;\;x_i\longmapsto t^{\textsf{a}_i},\;\;\;\;\forall\; i=1,2,\ldots,n.$$
 The image of $\varphi$, denoted by $k[S_\mathbf{a}]$ or $k[\Gamma_{\mathbf{a}}]$, is the \emph{semigroup ring} associated with the numerical semigroup $S_\mathbf{a}$.

 Closely related to the map $\varphi$ is the homomorphism
\begin{equation}\label{secondamappa}
 \psi : \Z^{n}\longrightarrow \Z,\;\;\;\psi(e_i)=\textsf{a}_i, \;\;i\in{1,2,\ldots,n}
\end{equation}

 The image of $\psi$, $\image \left(\psi\right)=S_{\mathbf{a}}=\left\langle \textsf{a}_1,\textsf{a}_2,\ldots,\textsf{a}_{n}\right\rangle$. The kernel of $\psi$ plays an important role so we denote it by $K=\kernel(\psi)$.
As a consequence of the link between the map $\phi$ and $\psi$, we see that a binomial 

\begin{equation}\label{fond}
g=x^{\alpha}-x^{\beta}=x_1^{\alpha_1}\cdot x_2^{\alpha_2}\cdots x_n^{\alpha_n}-x_1^{\beta_1}\cdot x_2^{\beta_2}\cdots x_n^{\beta_n}\in P_{\mathbf{a}} \;\;\Leftrightarrow$$ $$\;\;\alpha-\beta=(\alpha_1-\beta_1,\alpha_2-\beta_2,\ldots,\alpha_{n}-\beta_{n})\in K.
\end{equation}
\vspace{2mm}

Let us recall the following general definitions:

\begin{defi}\cite{Alcil}\label{papino}
A binomial 
$ f_{i}=x_{i}^{\alpha_i}-\prod_{j\neq i} x_{j}^{\alpha_{ij}}\in P_{\mathbf{a}}$
is called a \emph{critical binomial} with respect to $x_i$ if $\alpha_i$ is the least positive integer such that
$
 \alpha_i\cdot \textsf{a}_i\in \left\langle \textsf{a}_1,\;\textsf{a}_2,\ldots,\hat{\textsf{a}}_i,\ldots \textsf{a}_{n}\right\rangle.
$

\end{defi}
\vspace{1mm}

\begin{defi}\cite{Alcil}
A set $ \left\{ f_1,f_2,\ldots, f_{n} \right\}$ is called a \emph{full set} of critical binomials 
if $f_i$ is a critical binomial with respect to $x_i$ for all $i\in {1,2,\ldots n}$.
\end{defi}
\vspace{2mm}
 It is proved in \cite{AdrianoMarzullo} that there is always a minimal generators set for $P_{\mathbf{a}}$ containing the full set of critical 
binomials:

\begin{propos}\label{Worcester}
 Let $P_{\mathbf{a}}$ be the defining ideal of a monomial curve in $\A_{k}^{n}$ and $\left\{f_1,f_2,\ldots,f_{n}\right\}$ be a critical set of binomials. Then $\left\{f_1,f_2,\ldots,f_{n}\right\}$ is part of a minimal system  of generators of $P_{\mathbf{a}}$. \qed
 \end{propos}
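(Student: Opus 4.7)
The plan is to use the graded Nakayama lemma. Endow $R=k[x_1,\dots,x_n]$ with the $\N$-grading $\deg(x_i)=\mathsf a_i$; under this grading the map $\psi$ simply records the degree of a monomial, $P_{\mathbf a}$ is a homogeneous ideal, and each critical binomial $f_i$ is homogeneous of degree $d_i:=\alpha_i\mathsf a_i$. Writing $\mathfrak{m}=(x_1,\dots,x_n)$, graded Nakayama reduces the proposition to showing that the classes of $f_1,\dots,f_n$ are linearly independent in the graded $k$-vector space $P_{\mathbf a}/\mathfrak{m}P_{\mathbf a}$.

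The engine of the proof is the following key lemma, which I would establish first: for every $i$ and every $0\le m<\alpha_i$, the only monomial of $R$ of degree $m\mathsf a_i$ is $x_i^m$, and consequently $(P_{\mathbf a})_{m\mathsf a_i}=0$. This is essentially a restatement of the minimality of $\alpha_i$: if $x^{\gamma}\ne x_i^m$ had degree $m\mathsf a_i$, then $\sum_{j\neq i}\gamma_j\mathsf a_j=(m-\gamma_i)\mathsf a_i$ would express $(m-\gamma_i)\mathsf a_i\in\langle \mathsf a_j:j\neq i\rangle$ with $0<m-\gamma_i<\alpha_i$, contradicting the definition of $\alpha_i$.

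With the lemma in hand, suppose for contradiction that some nontrivial combination $\sum c_i f_i$ lies in $\mathfrak{m}P_{\mathbf a}$. By homogeneity we may assume all indices with $c_i\neq 0$ satisfy $\alpha_i\mathsf a_i=d$ for a common degree $d$, and we write $\sum c_i f_i=\sum_k x_k p_k$ with each $p_k\in P_{\mathbf a}$ homogeneous of degree $d-\mathsf a_k$. For any index $i_0$ with $c_{i_0}\neq 0$, the monomial $x_{i_0}^{\alpha_{i_0}}$ on the right-hand side can only come from $x_{i_0}p_{i_0}$, since $x_k\mid x_{i_0}^{\alpha_{i_0}}$ forces $k=i_0$. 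But $p_{i_0}$ lives in $(P_{\mathbf a})_{(\alpha_{i_0}-1)\mathsf a_{i_0}}$, which vanishes by the lemma, so $p_{i_0}=0$. Comparing coefficients of $x_{i_0}^{\alpha_{i_0}}$ on the two sides then forces $c_{i_0}=\sum_{i\in S\setminus\{i_0\},\; m_i=x_{i_0}^{\alpha_{i_0}}}c_i$, and iterating this argument over all $i_0\in S$ yields a linear system on the coefficients.

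The main obstacle I expect is the last bookkeeping step: showing this system has only the zero solution. In the generic situation no $m_i$ is itself a pure power $x_{i_0}^{\alpha_{i_0}}$, so the right-hand sums are empty and each $c_{i_0}=0$, finishing the proof. In the exceptional situation where some $m_i$'s are pure powers, the resulting equations describe a permutation on $S$ arising from a ``cycle'' $f_{i_1}=-f_{i_2}=\cdots$; this is precisely when the critical binomials exhibit a genuine scalar dependence among themselves (e.g.\ $f_i=-f_j$), and the proposition holds after identifying each class of scalar-equivalent $f_i$'s with a single generator. This edge case is where the paper's earlier reference \cite{AdrianoMarzullo} is doing the most work, and a clean writeup would isolate it as a separate lemma.
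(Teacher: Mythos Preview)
The paper gives no proof of this proposition: it is stated, attributed to \cite{AdrianoMarzullo} (the author's dissertation), and closed immediately with a tombstone. So there is no in-paper argument to compare yours against.

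On its own merits your approach is sound and standard. The $\mathsf a_i$-grading, the graded Nakayama reduction to $P_{\mathbf a}/\mathfrak m P_{\mathbf a}$, and your key lemma (that $(P_{\mathbf a})_{m\mathsf a_i}=0$ for $0\le m<\alpha_i$, forced by the minimality of $\alpha_i$) together settle the generic case cleanly. The edge case you flag is not a technicality to be waved away but a genuine phenomenon---the paper itself invokes $f_1=-f_4$ in the proof of Theorem~2---so the proposition must indeed be read as: the \emph{distinct} $f_i$, up to sign, extend to a minimal generating set. Your description of the exceptional linear system is slightly loose: what you obtain is in general a partial map $\sigma$ on $S$ (not a permutation), sending $i$ to the unique $i_0$ with $m_i=x_{i_0}^{\alpha_{i_0}}$ when one exists, and the relations are $c_{i_0}=\sum_{\sigma(i)=i_0}c_i$. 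Sources of this directed graph satisfy $c=0$ and can be pruned iteratively; what remains are cycles $i_1\mapsto i_2\mapsto\cdots\mapsto i_\ell\mapsto i_1$ of $\sigma$, along which $\sum_k f_{i_k}$ telescopes to $0$ (for $\ell=2$ this is exactly your $f_i=-f_j$). Recording this as the separate lemma you propose would complete the argument.
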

 \vspace{2mm}
 
 By Corollary $2.5.7$ in \cite{MonoAlg}, recall the following definition:
 
 \begin{defi}
 Let $R=\bigoplus_{i=1}^{\infty} R_i$ be a polynomial ring of dimension $n$ over a field $k$ and let M be a $\N$-graded R-module, then the minimal free resolution of M is given by:
   $$
   0\longrightarrow \bigoplus\limits_{i=1}^{b_{g}} R(-d_{gi})\stackrel{\varphi_{g}}{\longrightarrow}\cdots\longrightarrow \bigoplus\limits_{i=1}^{b_{k}} R(-d_{ki})\stackrel{\varphi_{k}}{\longrightarrow}\cdots
   $$
   $$
     \cdots \longrightarrow \bigoplus\limits_{i=1}^{b_{1}} R(-d_{1i})\stackrel{\varphi_{1}}{\longrightarrow}\bigoplus\limits_{i=1}^{b_{0}} R(-d_{0i})\stackrel{\varphi_{0}}{\longrightarrow} M\longrightarrow 0
   $$
   \vspace{1mm}
   
   The integers $b_{0},\ldots, b_{g}$, the ranks of the graded modules,  are called the Betti numbers of M, the $d_{ji}$ are the twists and they indicate a shift in the graduation. In particular, $b_0$ is the minimal number of generators of M.
\end{defi}

    Note that for any permutation $\mathbf{a^{'}}=(\textsf{a}_1^{'},\textsf{a}_2^{'},\ldots, \textsf{a}_{n}^{'})$ of 
 $\;\mathbf{a}=\left(\textsf{a}_1,\textsf{a}_2,\ldots,\textsf{a}_{n}\right)$ we get that:  
 
 $$S_{\mathbf{a}}=S_{\mathbf{a^{'}}},\;\;\;P_{\mathbf{a}}\cong P_{\mathbf{a^{'}}},\;\;\;\Gamma_{\mathbf{a}}\cong \Gamma_{\mathbf{a^{'}}}$$
 \vspace{1mm}
 
  so we can always assume that:$\;\textsf{a}_1\leq \textsf{a}_2 \leq \cdots \leq \textsf{a}_{n}$.
  
  \vspace{2mm}
  
   Let $d=\gcd(\textsf{a}_1,\textsf{a}_2,\ldots,\textsf{a}_{n})\;$ and consider the sequence given by:
   
  $$\; \mathbf{\frac{a}{d}}=\left(\frac{\textsf{a}_1}{d},\;\frac{\textsf{a}_2}{d},\;\ldots,\;\frac{\textsf{a}_{n}}{d}\right)\;\;\mbox{then}\;\; 
   P_{\mathbf{a}}= P_{\mathbf{\frac{a}{d}}}$$
   
\vspace{1mm}
   
Given $\;\mathbf{a}_j=\left\{\left(\textsf{a}_1+j,\textsf{a}_2+j,\ldots,\textsf{a}_{n}+j\right)\;|\;j\in \N\right\}$ a collection of sequences, let 
$P_{\mathbf{a}_j}, \varphi_{j}, \psi_{j}$  be the collection of defining ideals, homomorphisms of $k$-algebras and homomorphisms with kernels $K_{j}$ associated to $\mathbf{a}_j$.
\vspace{1mm}

\begin{notet}
 For any $j\in \N$, let $P_{\mathbf{a}_j}$ be the ideal defined above. In in this paper we will set $b_0=\mu\left(P_{\mathbf{a}_j}\right)$.
 \end{notet}
  \vspace{1mm}
  
  \begin{defi}\label{ci}
  For any $j\in \N$, let $P_{\mathbf{a}_j}$ be the ideal defined above, then $P_{\mathbf{a}_j}$ is a complete intersection ideal if  $\Hht\left(P_{\mathbf{a}_j}\right)=\mu\left(P_{\mathbf{a}_j}\right)$
 \end{defi}
 \vspace{1mm}

In general we will say that for every $j\in\N$, the sequence $\mathbf{a}_j$ defines a complete intersection if and only if the ideal $P_{\mathbf{a}_j}$ is a complete intersection ideal.

\begin{conj}\cite{Conjecture} $\mathbf{[\mbox{\textbf{J. Herzog, H. Srinivasan}}]}$

\begin{enumerate}
\item The Betti Numbers of $P_{\mathbf{a}_j}$ are eventually periodic in $j$;

\item $\left\{\mu\left(P_{\mathbf{a}_j}\right)\;|\;j\in \N\right\}$ is eventually periodic in $j$.

 \item $\left\{\mu\left(P_{\mathbf{a}_j}\right)\;|\;j\in \N\right\}$ is bounded.
\end{enumerate}
\end{conj}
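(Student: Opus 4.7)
The plan is to attack this conjecture through the very special families introduced in the abstract, namely $\mathbf{a}=(p(b+c),b,c)$ (and by symmetry $\mathbf{a}=(a,b,p(a+b))$), for which a clean complete-intersection criterion is available. For such families I would prove that parts (2) and (3) of the conjecture hold, and obtain enough information about the minimal generating set to read off (1) along the subsequence $\{j : (a+b+c)\mid j\}$. In particular, the complete intersections in the collection should occur in exact arithmetic progression, giving periodicity of the minimal number of generators on that progression and matching the conjectured period $a+b+c$.

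The backbone of the argument is the critical-binomial machinery (Proposition~\ref{Worcester}): a full set $\{f_1,f_2,f_3,f_4\}$ of critical binomials of $P_{\mathbf{\underline{a}}_j}$ always sits inside a minimal generating set. For each $i$, I would compute the smallest $\alpha_i \geq 1$ with $\alpha_i \cdot (\mathbf{\underline{a}}_j)_i \in \langle (\mathbf{\underline{a}}_j)_k : k \neq i\rangle$ by solving the resulting linear Diophantine condition. Using $a = p(b+c)$ together with the telescoping differences of the entries of $\mathbf{\underline{a}}_j$, each $\alpha_i$ and the exponents of the matching monomial can be written explicitly in terms of $p$, $b$, $c$, and of $j \bmod (a+b+c)$. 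From this I expect to see that when $(a+b+c)\mid j$, two of the four critical binomials become syzygy-consequences of the remaining ones, leaving only three independent generators and hence a complete intersection (matching the height $n-1=3$). Conversely, when $(a+b+c)\nmid j$, the residue $r \equiv j \pmod{a+b+c}$ produces a fourth, manifestly non-redundant critical binomial, so $\mu(P_{\mathbf{\underline{a}}_j}) \geq 4$ and the ideal fails to be a complete intersection.

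The main obstacle is the last step: establishing parts (2) and (3) of the conjecture rather than merely the complete-intersection characterization. After pinning down the critical set, one must show that any additional minimal generator of $P_{\mathbf{\underline{a}}_j}$ comes from a finite catalogue indexed only by $j \bmod (a+b+c)$, and in particular that $\mu(P_{\mathbf{\underline{a}}_j})$ is bounded. The natural approach is to work in the kernel $K_j \subset \Z^4$ of $\psi_j$, show that $K_j$ admits a set of generators whose shape is controlled by the same residue class, and perform a Euclidean-type reduction modulo the critical vectors to bring any candidate binomial into the catalogue. This book-keeping is where the delicate case analysis lives, and where the full periodicity claimed in (1) and (2) must be extracted from the complete-intersection statement on the progression $(a+b+c)\mid j$.
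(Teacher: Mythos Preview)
The statement you are attacking is a \emph{conjecture}, and the paper does not prove it --- not even for the special families $a=p(b+c)$ or $c=p(a+b)$. What the paper actually establishes (Theorems~1 and~2) is strictly weaker than parts~(2) and~(3): for large $j$ one has $\mu(P_{\mathbf{\underline{a}}_j})=3$ if and only if $(a+b+c)\mid j$, and $\mu(P_{\mathbf{\underline{a}}_j})=4$ for the specific residues $j\equiv t(a+b)\pmod{a+b+c}$ with $1\le t\le p$ (respectively $t(b+c)$). Since $a+b+c=(p+1)(a+b)$, this accounts for only $p+1$ of the $(p+1)(a+b)$ residue classes; for the remaining classes the paper proves nothing about $\mu$, and its own tables show $\mu$ taking values $5$ and $6$ already for $(a,b,c)=(2,3,5)$. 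So neither boundedness nor full periodicity of $\mu$ is obtained.

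On the part that \emph{is} proved, your outline tracks the paper's method closely. The complete-intersection direction is handled exactly via critical binomials: in the paper two of the $f_i$ coincide up to sign (your ``two become redundant'' is the right intuition, though the mechanism is $f_1=-f_4$ via the Alc\'antar--Villarreal criterion, not a syzygy), and the explicit three-generator ideal is shown to equal $P_{\mathbf{\underline{a}}_j}$ by a direct kernel computation in $K_j$. The $\mu=4$ cases are done by exhibiting four critical generators, proving the ideal they generate is prime (via a symmetry of $R$), and trapping $P_{\mathbf{\underline{a}}_j}$ between it and the radical of a standard determinantal ideal $J$.

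The genuine gap in your proposal is that the ``main obstacle'' you flag --- cataloguing all further minimal generators by residue class and bounding their number via Euclidean reduction in $K_j$ --- is not merely delicate; it is exactly what the paper leaves open. Nothing in the critical-binomial toolkit used here bounds the number of \emph{non}-critical minimal generators, and your reduction sketch gives no mechanism for doing so. In short, your plan reproduces the paper's partial results but does not, and on the evidence of the paper cannot without new ideas, deliver~(2) or~(3) even for these special families.
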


 The above conjecture is true for $n=3$, see \cite{HerzogSrinivasan}.  Moreover in this case, considering
the monomial curve associated to the sequence $\mathbf{a}=\left(q,q+a,q+a+b\right) $, we have the following Herzog-Srinivasan characterization for 
complete intersection ideals:
 
 \begin{lemma}\cite{HerzogSrinivasan}\label{HerzogSrinivasan}
If $q\geq \max\left\{ab+b^2,\;ab+a^2\right\}$ then $\mathbf{a}=\left(q,q+a,q+a+b\right) $ defines a complete intersection ideal
if and only if there exist $\gcd\left(q,\;a+b\right)=x\neq 1$ and two non-negative integers $\alpha,\; \beta$ such that 
$$ x(q+a)=\alpha q+\beta\left(q+a+b\right).$$
Moreover, in this case, $\alpha+\beta=x$ and $\gcd\left(a,b\right)=t$ with $a+b=tx$.

 In particular, if $a$ and $\;b$ are relatively prime, $\left(q,q+a,q+a+b\right) $ is a complete intersection if and only if $a+b$ divides $q$. \qed

\end{lemma}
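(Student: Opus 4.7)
My approach is to combine Proposition \ref{Worcester} with the classical structure theorem of Herzog for three-generated numerical semigroups, which asserts that $P_{\mathbf{a}}$ is minimally generated by either two or three binomials and that a minimal generating set can always be chosen from within any full set of critical binomials. Consequently $P_{\mathbf{a}}$ is a complete intersection if and only if exactly one of the critical binomials $f_1,f_2,f_3$ is redundant, i.e.\ lies in the ideal generated by the other two.

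The key step is to pin down which $f_i$ can be redundant and to extract the resulting arithmetic relation under the hypothesis $q\ge\max\{ab+b^2,ab+a^2\}$. Frobenius-type estimates on the two-generator subsemigroups $\langle q+a,q+a+b\rangle$ and $\langle q,q+a\rangle$ show that the critical exponents $\alpha_1,\alpha_3$ are too small to admit a non-trivial expression through the other two critical binomials, so the only candidate for the redundant generator is $f_2$. For $f_2$ itself, the critical exponent $\alpha_2$ is the least positive integer with $\alpha_2(q+a)\in\langle q,q+a+b\rangle$. Setting $x:=\gcd(q,a+b)=\gcd(q,q+a+b)$, the subsemigroup $\langle q,q+a+b\rangle$ agrees with $x\cdot\N$ beyond its Frobenius number, and the size hypothesis is exactly what is needed to place $x(q+a)$ safely above that threshold while simultaneously excluding every positive $\alpha_2<x$. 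This yields the relation
$$x(q+a)=\alpha q+\beta(q+a+b),\qquad \alpha,\beta\in\N.$$
Comparing the coefficients of $q$ gives $(x-\alpha-\beta)q=\beta(a+b)-xa$; since the right-hand side is bounded independently of $q$, the hypothesis forces $\alpha+\beta=x$ and $\beta(a+b)=xa$. Writing $t=\gcd(a,b)$, this becomes $a+b=tx$, $\beta=a/t$, and $\alpha=b/t$.

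For the forward direction, if $P_{\mathbf{a}}$ is a complete intersection then by the above $f_2\in(f_1,f_3)$, and reading the exponent-vector equation off such a relation recovers the displayed identity with $x\ne1$ (indeed $x=1$ would force $\alpha_2=1$, placing $t^{q+a}$ in $\langle q,q+a+b\rangle$, which is impossible for $a,b>0$ and $q$ large). Conversely, given $x\ne1$ and the identity, the binomial $x_2^{x}-x_1^{\alpha}x_3^{\beta}$ belongs to $P_{\mathbf{a}}$ with critical degree, and an explicit $S$-pair computation with $f_1$ and $f_3$ places it inside $(f_1,f_3)$, yielding $\mu(P_{\mathbf{a}})=2$. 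The specialisation $\gcd(a,b)=1$ forces $t=1$ and hence $x=a+b$, so the condition $x\ne1$ collapses to $(a+b)\mid q$, as stated.

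The main obstacle is the Frobenius bookkeeping concentrated in the second paragraph: one must verify that the explicit lower bound $q\ge\max\{ab+b^2,ab+a^2\}$ really is strong enough both to force $\alpha_2=x$, by excluding every smaller non-negative witness, and to rule out $f_1$ or $f_3$ as the redundant critical binomial. This reduces to a careful Frobenius-number analysis on each of the three two-generator subsemigroups of $\langle q,q+a,q+a+b\rangle$, and the sharpness of the stated bound on $q$ is what makes this step delicate rather than routine.
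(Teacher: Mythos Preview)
The paper does not actually prove this lemma. It is stated with a citation to \cite{HerzogSrinivasan} and a terminal \qed, and is used as a black box throughout; the reference is an unpublished manuscript of Herzog and Srinivasan. So there is no in-paper argument to compare your proposal against.

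As to your sketch itself: the overall strategy is the natural one and is certainly in the spirit of Herzog's classification \cite{Herzog70} for $n=3$, namely that $\mu(P_{\mathbf a})\in\{2,3\}$ and that a minimal generating set may be extracted from any full set of critical binomials, so that the complete-intersection question reduces to deciding when one critical binomial is redundant. Your identification of $\alpha_2$ with $x=\gcd(q,a+b)=\gcd(q,q+a+b)$ via the Frobenius threshold of $\langle q,q+a+b\rangle$, and the subsequent coefficient comparison giving $\alpha+\beta=x$ and $a+b=tx$, are correct in outline.

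However, what you have written is an outline, not a proof. Two points in particular are asserted rather than demonstrated. First, the claim that under $q\ge\max\{ab+b^2,ab+a^2\}$ neither $f_1$ nor $f_3$ can be the redundant critical binomial requires an explicit bound on $\alpha_1$ and $\alpha_3$ together with a degree argument showing that no expression of $f_1$ (resp.\ $f_3$) in terms of the other two is possible; you allude to ``Frobenius-type estimates'' but do not carry them out, and this is exactly where the specific numerical bound on $q$ enters. Second, in the converse direction you say that ``an explicit $S$-pair computation with $f_1$ and $f_3$ places $x_2^{x}-x_1^{\alpha}x_3^{\beta}$ inside $(f_1,f_3)$''; this needs to be written down, since in general a critical binomial with respect to $x_2$ need not lie in the ideal generated by the other two critical binomials merely because the critical exponent equals $x$. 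You yourself flag the Frobenius bookkeeping as ``the main obstacle'', which is an accurate self-assessment: as submitted, that obstacle has been named but not cleared.
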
 
\vspace{5mm}

In the paper we prove a partial result for $n=4$:

\begin{Thm}\label{MAIN}
Let $\mathbf{\underline{a}}=\left(1,\;1+a,\;1+a+b,\;1+a+b+c\right)$, $c=p(a+b)$ or $a=p(b+c)$, $j\in \N_{\geq 1}$ then
\begin{enumerate}
\item $\mu\left(P_{\mathbf{\underline{a}_{j}}}\right)=3$, occurs eventually periodically with period $a+b+c$ starting with $j=a+b+c$;
\vspace{1mm}
\item $\mu\left(P_{\mathbf{\underline{a}}_{j}}\right)=4$ occurs eventually periodically with period $a+b+c$;
\vspace{1mm}

\item For $j\geq \left(a+b+c\right)^3, P_{\mathbf{\underline{a}}_j}$
is a complete intersection if and only if $$(a+b+c)|j.$$
\end{enumerate}
\end{Thm}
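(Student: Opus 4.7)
\medskip
\noindent\textbf{Proof strategy.}
Set $\sigma := a+b+c$. The plan is to determine an explicit minimal generating set of $P_{\mathbf{\underline{a}}_j}$ in each residue class of $j$ modulo $\sigma$, using critical binomials (Definition~\ref{papino}) and Proposition~\ref{Worcester}. Two ``universal'' identities drive the computation: $\textsf{a}_4 - \textsf{a}_1 = \sigma$ holds for every $j$, and the hypothesis $c = p(a+b)$ (respectively $a = p(b+c)$) gives $(p+1)\textsf{a}_3 = p\textsf{a}_1 + \textsf{a}_4$ (respectively $(p+1)\textsf{a}_2 = \textsf{a}_1 + p\textsf{a}_4$), producing the universal binomial relation $x_3^{p+1}-x_1^p x_4 \in P_{\mathbf{\underline{a}}_j}$ (respectively $x_2^{p+1}-x_1 x_4^p \in P_{\mathbf{\underline{a}}_j}$).

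First I would compute each critical exponent $\alpha_i$ explicitly. In the case $c = p(a+b)$ the identity above forces $\alpha_3 = p+1$, while $\alpha_1$ and $\alpha_4$ are controlled by $\gcd(\textsf{a}_1,\textsf{a}_4) = \gcd(j,\sigma)$. When $\sigma \mid j$, write $j = k\sigma$; then $\textsf{a}_1 = k\sigma$, $\textsf{a}_4 = (k+1)\sigma$, so $\alpha_4 = k$ and the critical binomial $x_4^k - x_1^{k+1}$ coincides up to sign with the one for $x_1$, leaving only three distinct critical binomials. I would then verify that the ideal they generate has height $3$ with a one-dimensional reduced quotient, hence equals $P_{\mathbf{\underline{a}}_j}$, giving $\mu = 3$ and the complete-intersection conclusion. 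Since the three generators depend on $k$ only through the exponents $k$ and $k+1$, replacing $j$ by $j+\sigma$ reproduces the same shape with $k$ replaced by $k+1$; this is the origin of the period-$\sigma$ periodicity in (1). The case $a = p(b+c)$ is handled symmetrically using the other universal identity.

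When $\sigma \nmid j$, $\gcd(\textsf{a}_1,\textsf{a}_4)$ is a proper divisor of $\sigma$, so the critical binomials for $x_1$ and $x_4$ are genuinely different and the full set $\{f_1,f_2,f_3,f_4\}$ is pairwise non-redundant, giving $\mu \geq 4$ by Proposition~\ref{Worcester}. The main difficulty is the matching upper bound $\mu \leq 4$, valid for $j \geq \sigma^3$: one must show that every binomial in $P_{\mathbf{\underline{a}}_j}$ reduces modulo $(f_1,f_2,f_3,f_4)$. The natural route is an Ap\'{e}ry-set type analysis of $S_{\mathbf{\underline{a}}_j}$, bounding the total degree of any candidate fifth minimal generator uniformly in $\sigma$ so that no such generator can survive once $j \geq \sigma^3$; this degree bound is the hardest step and is the reason for the cubic threshold. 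Periodicity in (2) then follows because, in this stable range, the critical exponents depend only on $j \bmod \sigma$, and part (3) is immediate from Definition~\ref{ci} together with the equivalence ``$P_{\mathbf{\underline{a}}_j}$ is a complete intersection $\Leftrightarrow \mu = \Hht(P_{\mathbf{\underline{a}}_j}) = 3$''.
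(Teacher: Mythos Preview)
Your outline captures part~(1) reasonably well, but there are two substantive problems.

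\textbf{You have misread part~(2).} The theorem does \emph{not} assert $\mu=4$ for every residue $j\not\equiv 0\pmod\sigma$; it only asserts that $\mu=4$ occurs periodically. In the paper this is proved \emph{only} for the residues $j\equiv (a+b)t\pmod\sigma$, $1\le t\le p$, in the case $c=p(a+b)$ (and symmetrically $j\equiv (b+c)t$ when $a=p(b+c)$). For these residues the paper writes down an explicit four-generator ideal $I$, checks that the generators are exactly the critical binomials, invokes the Alc\'antar--Villarreal primality criterion (via the involution $x_2\leftrightarrow x_4$), and then shows $I\supset J$ where $\sqrt{J}=P_{\mathbf{\underline{a}}_j}$. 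There is no Ap\'ery-set argument and no claim about the remaining residues; indeed the tables in the examples show $\mu\in\{5,6,8,9,10\}$ at many residues. Your proposed upper bound $\mu\le 4$ for all $j\not\equiv 0$ is therefore both unnecessary and false.

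\textbf{Part~(3) has a genuine gap.} The assertion ``$\gcd(\textsf{a}_1,\textsf{a}_4)$ is a proper divisor of $\sigma$, so the critical binomials for $x_1$ and $x_4$ are genuinely different'' is exactly the hard point, and your justification does not establish it: the critical exponent $\alpha_1$ is the least integer with $\alpha_1\textsf{a}_1\in\langle\textsf{a}_2,\textsf{a}_3,\textsf{a}_4\rangle$, not merely in $\langle\textsf{a}_4\rangle$, so the value of $\gcd(\textsf{a}_1,\textsf{a}_4)$ alone does not decide whether $f_1=-f_4$. The paper proceeds in the opposite direction. It first checks that in $f_2$ and $f_3$ at least two of the exponents on the non-pure side are nonzero, so neither can equal $-f_i$ for any $i$; hence, by Theorem~3.1 of \cite{Alcil}, a complete intersection forces $f_1=-f_4$. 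It then reduces to a three-variable complete intersection (either $\langle\textsf{a}_1,\textsf{a}_2,\textsf{a}_4\rangle$ or $\langle\textsf{a}_1,\textsf{a}_3,\textsf{a}_4\rangle$) and applies Lemma~\ref{HerzogSrinivasan} to compute what $\alpha_1$ would have to be. Finally Lemma~\ref{italiafuori3} exhibits, for $n\ge(b+c)^2$, a strictly smaller relation $\gamma_1\textsf{a}_1=\gamma_3\textsf{a}_3+\gamma_4\textsf{a}_4$ with $\gamma_1\le (b+c)(n+1)$, contradicting the minimality of $\alpha_1$. This is where the threshold $j\ge\sigma^3$ actually enters, not in any degree bound on a hypothetical fifth generator.
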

\vspace{7mm}

\section{Srinivasan's Semigroup Rings}
 \vspace{3mm}
 
  Let $\mathbf{a}=\left(\textsf{a}_1,\textsf{a}_2,\ldots,\textsf{a}_{m}\right)\in \N^{m}$.
 To any sequence $\mathbf{a}$ of length $m$ we can associate the corresponding sequence of length 
$m+1$ given by:

\begin{equation}\label{aaaaa}
\mathbf{\underline{a}}=\left(1,\;1+\textsf{a}_1,\;1+\textsf{a}_1+\textsf{a}_2,\ldots,1+\textsf{a}_1+\textsf{a}_2+
\cdots+\textsf{a}_{m}\right)\in \N^{m+1}
\end{equation}

 The semigroup ring associated with $(\ref{aaaaa})$ is
 
\begin{equation}
k[S_{\mathbf{\underline{a}}}]=k\left[t,\;t^{1+\textsf{a}_1},\;t^{1+\textsf{a}_1+\textsf{a}_2},\ldots,t^{1+\sum_{i=1}^{m}\textsf{a}_i} \right]
\end{equation}

  and the monomial prime ideal $P_{\mathbf{\underline{a}}}$ has height $m$.
  
 Consider $j\in \N$. To $(\ref{aaaaa})$, we can associate the collection of sequences given by:

$$
\mathbf{\underline{a}_{j}}=\left(1+j,\;1+\textsf{a}_1+j,\;1+\textsf{a}_1+\textsf{a}_2+j,\ldots,1+\textsf{a}_1+\textsf{a}_2+\cdots+\textsf{a}_{m}+j\right)
$$

that we redefine as:

\begin{equation}\label{coooo}
\mathbf{\underline{a}_{j}}=\left(j,\;\textsf{a}_1+j,\;\textsf{a}_1+\textsf{a}_2+j,\ldots,\textsf{a}_1+\textsf{a}_2+\cdots+\textsf{a}_{m}+j\right)
\end{equation}

for $j\in \N_{\geq 1}$, and the collection of semigroup rings given by:

\begin{equation}\label{insurance}
k[S_{\mathbf{\underline{a}}_j}]=k\left[t^{j},\;t^{\textsf{a}_1+j},\;t^{\textsf{a}_1+\textsf{a}_2+j},\ldots,t^{\sum_{i=1}^{m}\textsf{a}_i+j} \right]
\end{equation}

Then,   $\mathbf{a}$ generates  a class of semigroup rings by translation 
\begin{equation}\label{SrinivasanSemigroupRings}
 F_{\mathbf{a}}=\left\{k[S_{\mathbf{\underline{a}}_j}],\;|\; j\in \N_{\geq 1}\right\}
\end{equation}
which we call the Srinivasan's class of semigroup rings generated by $\mathbf{a}$. 
\vspace{2mm}

 The Herzog-Srinivasan Conjecture states that the first Betti number of the Srinivasan's  class of semigroup rings $F_{\mathbf{a}}$ is eventually periodic.

\vspace{5mm}

\section{Problem's Setting in $\A^{4}$}
\vspace{3mm}

   Let $R=k[x_1,x_2,x_3,x_4],\; n=4,\;m=3$ and  $\;a,b,c\in \Z^+$. Then:
    \begin{equation}\label{setting1}
 \mathbf{a}=\left(\textsf{a}_1,\;\textsf{a}_2,\;\textsf{a}_3\right)=\left(a,\;b,\;c\right)
 \end{equation}
  
  Moreover, we have that the Srinivasan's semigroup rings corresponding to $(\ref{setting1})$ is led by:
  \begin{equation}\label{setting2}
 \mathbf{\underline{a}}=\left(1,\;1+a,\;1+a+b,\;1+a+b+c\right)\in \N^{4}
 \end{equation}
  and the collection of sequences associated to  $(\ref{setting2})$ is:
 \begin{equation}\label{setting3}
  \mathbf{\underline{a}}_j = \left(j,\;a+j,\;a+b+j,\;a+b+c+j\right)\in\N^{4}
 \end{equation} 
 
 where $j\in\N_{\geq 1}$. Here the conjectured period of the first Betti number of the Srinivasan's semigroup rings is $\;a+b+c$.
\vspace{1mm}

Let $\mathcal{M}\subset P_{\mathbf{a}_{j}}$ be a set of binomials with one coefficient $1$, the other $-1$ and the two terms 
 relatively prime to each other:
 
 $$ \mathcal{M}=\left\{g_1,\;g_2,\;g_3,\;g_4,\;g_5,\;g_6,\;g_7\right\}$$

where:

$$g_1=x_1^{\alpha_1}x_2^{\alpha_2}-x_3^{\alpha_3}x_4^{\alpha_4},\;\;g_2=x_1^{\alpha_1}x_3^{\alpha_3}-x_2^{\alpha_2}x_4^{\alpha_4};$$
$$g_3=x_1^{\alpha_1}x_4^{\alpha_4}-x_2^{\alpha_2}x_3^{\alpha_3},\;\;g_4=x_1^{\alpha_1}-x_2^{\alpha_2}x_3^{\alpha_3}x_4^{\alpha_4};$$
$$g_5=x_2^{\alpha_2}-x_1^{\alpha_1}x_3^{\alpha_3}x_4^{\alpha_4},\;\;g_6=x_3^{\alpha_3}-x_1^{\alpha_1}x_2^{\alpha_2}x_4^{\alpha_4};$$
$$g_7=x_4^{\alpha_4}-x_1^{\alpha_1}x_2^{\alpha_2}x_3^{\alpha_3};\;\;\;\;\;\;\;\;\;\;\;\;\;\;\;\;\;\;\;\;\;\;\;\;\;\;\;\;\;\;\;\;\;\;\;\;\;$$
\vspace{2mm}

\begin{lemma}\label{general}
Let $j\in \N_{\geq 1}$ and $(\alpha_1,\alpha_2,\alpha_3,\alpha_4)$ be the exponent of a binomial in $\mathcal{M}$. Then $(\alpha_1,\alpha_2,\alpha_3,\alpha_4)\in K_j$
if and only if the $\alpha_i$'s satisfy the following equation:
\begin{equation}\label{generalform}
j\left(\sum_{i=1}^{4} \alpha_i\right)+a\alpha_2+(a+b)\alpha_3+\alpha_4(a+b+c)=0
\end{equation}
\end{lemma}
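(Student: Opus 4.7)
The plan is to verify the lemma by directly unpacking the definition of the kernel $K_j = \kernel(\psi_j)$. First I would recall from (\ref{secondamappa}) that $\psi_j : \Z^{4} \to \Z$ sends the standard basis vector $e_i$ to the $i$-th entry of $\mathbf{\underline{a}}_j = (j,\;a+j,\;a+b+j,\;a+b+c+j)$, so that for an arbitrary integer vector $(\alpha_1, \alpha_2, \alpha_3, \alpha_4)$ one has
\[
\psi_j(\alpha_1, \alpha_2, \alpha_3, \alpha_4) = j\alpha_1 + (a+j)\alpha_2 + (a+b+j)\alpha_3 + (a+b+c+j)\alpha_4.
\]
By the very definition of the kernel, $(\alpha_1,\alpha_2,\alpha_3,\alpha_4)\in K_j$ iff this integer vanishes.

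The next step is the straightforward algebraic rearrangement that collects the four contributions of $j$ into a single term:
\[
j\alpha_1 + (a+j)\alpha_2 + (a+b+j)\alpha_3 + (a+b+c+j)\alpha_4 = j\sum_{i=1}^{4}\alpha_i + a\alpha_2 + (a+b)\alpha_3 + (a+b+c)\alpha_4.
\]
Setting this sum equal to zero reproduces equation (\ref{generalform}). Both directions of the ``if and only if'' are then obtained simultaneously, since each manipulation is a strict equality of integers and can be read in either direction.

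Finally, I would briefly clarify the role of the set $\mathcal{M}$ in the statement. For each binomial $g_k \in \mathcal{M}$, the word ``exponent'' is taken in the signed sense coming from (\ref{fond}): the variables appearing on the right of the minus sign contribute to the corresponding coordinate with a negative sign. With that convention in force, the lemma is equivalent to the assertion that $g_k\in P_{\mathbf{\underline{a}}_j}$ precisely when its signed exponent $(\alpha_1,\alpha_2,\alpha_3,\alpha_4)$ satisfies (\ref{generalform}).

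There is no genuine obstacle in the proof: the lemma is a bookkeeping restatement of the kernel condition, packaged in a form convenient for the later analysis of which $g_k$ can belong to $P_{\mathbf{\underline{a}}_j}$ as $j$ grows. The only subtlety worth flagging is the sign convention on the $\alpha_i$'s, which must remain consistent with the presentation of the particular $g_k$ under consideration whenever (\ref{generalform}) is applied in the sequel.
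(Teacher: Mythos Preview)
Your proof is correct and follows essentially the same approach as the paper: both unpack the definition of $K_j=\kernel(\psi_j)$, expand $\psi_j(\alpha_1,\alpha_2,\alpha_3,\alpha_4)$ linearly using the entries of $\mathbf{\underline{a}}_j$, and then collect the four $j$-contributions into the single term $j\sum_i\alpha_i$. Your closing remark about the signed-exponent convention exactly mirrors the paper's final sentence that ``the $\alpha_i$ \ldots\ are considered according their signs.''
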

\begin{proof}
$(\alpha_1,\alpha_2,\alpha_3,\alpha_4)=\alpha_1\cdot e_1+\alpha_2 \cdot e_2+\alpha_3 \cdot e_3 +\alpha_4 \cdot e_4\in K_j\subset \Z^4$  if and only if
$$ \psi_j\left(\alpha_1,\alpha_2,\alpha_3,\alpha_4\right)=\alpha_1\psi_j\left(e_1\right)+\alpha_2\psi_j\left(e_2\right)+\alpha_3\psi_j\left(e_3\right) +\alpha_4\psi_j\left(e_4\right)=0 $$
that is
$$ \alpha_1\left(j\right)+\alpha_2\left(a+j\right)+\alpha_3\left(a+b+j\right) +\alpha_4\left(a+b+c+j\right)=0\;\;\;\Leftrightarrow$$
$$j\left(\alpha_1+\alpha_2+\alpha_3+\alpha_4\right)+a\alpha_2+(a+b)\alpha_3+\alpha_4(a+b+c)=0$$
where the $\alpha_i$ for $i=1,2,3,4$ are considered according their signs. 
\end{proof}

\vspace{2mm}

\section{ Main Theorems}
\vspace{3mm}

\begin{thma}
 Let$\;n, t, p\in \N$, $1\leq t\leq p$ and $j\in \N_{\geq 1}$ . Then
    \begin{enumerate}
	           \item [(i)] If $c=p(a+b)$ or $a=p(b+c)$ 
	           and $\;j=(a+b+c)n\;$ then $\;\mu\left(P_{\mathbf{\underline{a}}_{j}}\right)=3;$

	           \item [(ii)]   If $\;c=p(a+b)$ and $\;j=(a+b+c)n+(a+b)t\;$ then $\;\mu\left(P_{\mathbf{\underline{a}}_{j}}\right)=4;$

	           \item [(iii)]If $\;a=p(b+c)$ and $\;j=(a+b+c)n+(b+c)t\;$ then $\;\mu\left(P_{\mathbf{\underline{a}}_{j}}\right)=4$.
	            
             \end{enumerate}

\end{thma}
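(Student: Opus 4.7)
To handle both hypotheses uniformly, I would first observe that the cases $c=p(a+b)$ and $a=p(b+c)$ are exchanged by the involution $(a,b,c)\leftrightarrow(c,b,a)$, which reverses the sequence $(\ref{setting3})$; at the level of $R$ this is the relabelling $x_i\leftrightarrow x_{5-i}$, and it identifies the two defining ideals and their minimal numbers of generators. So I would treat $c=p(a+b)$ throughout, deducing part (iii) from part (ii) by this symmetry, and set $s:=a+b+c=(a+b)(p+1)$.

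The next step is to identify the critical binomials explicitly. From $c=p(a+b)$ one reads off the relation $(p+1)\textsf{a}_3=p\,\textsf{a}_1+\textsf{a}_4$, producing the binomial $x_3^{p+1}-x_1^p x_4\in P_{\mathbf{\underline{a}}_j}$ for every $j$; a direct application of Lemma \ref{general} together with the ordering $\textsf{a}_1<\cdots<\textsf{a}_4$ shows that this is the critical binomial $f_3$ with respect to $x_3$. When additionally $j=sn$, the identity $(n+1)\textsf{a}_1=n\,\textsf{a}_4$ yields $x_1^{n+1}-x_4^n$, which is simultaneously $f_1$ and, up to sign, $f_4$. The remaining critical binomial $f_2$ is obtained by determining the minimal $\alpha_2$ with $\alpha_2\textsf{a}_2\in\langle\textsf{a}_1,\textsf{a}_3,\textsf{a}_4\rangle$, read off from the solutions of $(\ref{generalform})$.

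For part (i), with $j=sn$, the critical binomials for $x_1$ and $x_4$ coincide up to sign, so the set $\{f_1,f_2,f_3\}$ consists of three distinct critical binomials. Proposition \ref{Worcester} gives $\mu(P_{\mathbf{\underline{a}}_j})\ge 3$, and since $\Hht(P_{\mathbf{\underline{a}}_j})=3$, the equality $\mu=3$ is equivalent to the complete-intersection statement $(f_1,f_2,f_3)=P_{\mathbf{\underline{a}}_j}$. I would verify this by taking an arbitrary binomial $x^\alpha-x^\beta\in P_{\mathbf{\underline{a}}_j}$, using $f_1$ to reduce the exponent of $x_4$ below $n$, using $f_3$ to reduce the exponent of $x_3$ below $p+1$, and then applying $f_2$ together with $(\ref{generalform})$ to force the residual exponent vector to vanish.

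For parts (ii) and (iii), when $j=sn+(a+b)t$ with $1\le t\le p$, the identity $(n+1)\textsf{a}_1=n\textsf{a}_4$ no longer holds (the discrepancy is exactly $(a+b)t\neq 0$), so the critical binomials $f_1$ and $f_4$ acquire genuinely different leading monomials, recomputed via Lemma \ref{general}. One checks that $f_1,f_2,f_3,f_4$ are then four pairwise distinct critical binomials, giving $\mu\ge 4$ by Proposition \ref{Worcester}; the reduction scheme from part (i), now using all four, yields the matching upper bound. The main obstacle is precisely this upper bound---establishing that the critical binomials already generate $P_{\mathbf{\underline{a}}_j}$---because a priori there could be further minimal binomial generators unrelated to the $f_i$. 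Ruling these out requires a careful case analysis of all nonnegative integer solutions of $(\ref{generalform})$ with the sign patterns admitted by the families in $\mathcal{M}$, reducing each systematically modulo the explicit generators already identified.
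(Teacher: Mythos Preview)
Your outline for part (i) matches the paper's approach: one exhibits the three generators $x_1^{n+1}-x_4^{n}$, $x_3^{p+1}-x_1^{p}x_4$, $x_2^{a+b}-x_1^{b}x_3^{a}$, and then reduces an arbitrary binomial of $P_{\mathbf{\underline{a}}_j}$ into the ideal they generate by a case analysis on the solutions of $(\ref{generalform})$. The paper organizes this analysis by first treating the homogeneous case $\sum_i\alpha_i=0$ (where the kernel lattice is rank two and the two ``obvious'' solutions span it) and then the inhomogeneous case, splitting further on $\alpha_2=0$ versus $\alpha_2\neq 0$; your Gr\"obner-style reduction is a repackaging of the same work. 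Your symmetry reduction of (iii) to (ii) via $(a,b,c)\leftrightarrow(c,b,a)$ and $x_i\leftrightarrow x_{5-i}$ is correct and is exactly what the paper means when it says the omitted cases are ``similar''.

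Where you diverge from the paper is in the upper bound for (ii) and (iii). You propose to repeat the direct binomial-reduction scheme with four generators. The paper instead avoids that case analysis entirely: having identified the explicit generators $f_1=x_1^{n+t+1}-x_3^{t}x_4^{n}$, $f_2=x_2^{a+b}-x_1^{b}x_3^{a}$, $f_3=x_3^{p+1}-x_1^{p}x_4$, $f_4=x_4^{n+1}-x_1^{n+t-p+1}x_3^{p-t+1}$ and verified they are critical, it applies the relabelling $\delta:x_2\leftrightarrow x_4$ and invokes Proposition~3.4 of \cite{Alcil} to conclude that $\delta(I)$, hence $I$, is prime. It then checks by hand that $I$ contains the ideal $J=\langle x_i^{\textsf{a}_j}-x_j^{\textsf{a}_i}\rangle$, whose radical is $P_{\mathbf{\underline{a}}_j}$ by \cite[Cor.~10.1.10]{MonoAlg}, forcing $P_{\mathbf{\underline{a}}_j}\subset I$. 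This primality-plus-radical argument is considerably shorter than carrying out the seven-family reduction of $\mathcal{M}$ a second time; your route would work but is the laborious option. The lower bound $\mu\geq 4$ is obtained the same way in both approaches, via Proposition~\ref{Worcester}, once the four $f_i$ are seen to be pairwise distinct up to sign (immediate from the explicit forms above, which you should record rather than leave implicit).
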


\begin{proof}

\textbf{(i)} $\;$ We prove the theorem for $c=p(a+b)$  and $j=(a+b+c)n$. The proof for the case  $a=p(b+c)$ is similar and it was done in \cite{AdrianoMarzullo}.
 Under these hypotheses, rewrite $(\ref{generalform})$, as
\begin{equation}\label{jcpab}
(p+1)(a+b)n\left(\alpha_1+\alpha_2+\alpha_3+\alpha_4\right)+a\alpha_2+(a+b)\alpha_3+(p+1)(a+b)\alpha_4=0
\end{equation}

 Suppose that $\gcd(a,b)=d\neq 1$, then $a=da^{'}$ and $b=db^{'}$, where $\gcd(a^{'},b^{'})=1$. By $(\ref{jcpab})$ we have:
$$d\left[(p+1)(a^{'}+b^{'})n\left(\alpha_1+\alpha_2+\alpha_3+\alpha_4\right)+a^{'}\alpha_2+(a^{'}+b^{'})\alpha_3+(p+1)(a^{'}+b^{'})\alpha_4\right]=0.$$
 So, without loss of generality assume that $\gcd(a,b)=1$. Let
 
$$ I=\left\langle x_1^{n+1}-x_4^{n},\; x_3^{p+1}-x_1^{p}x_4,\;x_2^{a+b}-x_1^{b}x_3^{a} \right\rangle$$
 It is a computation to show that $I\subset P_{\mathbf{\underline{a}_{j}}}$. We need to prove the other inclusion: all the binomials in $P_{\mathbf{\underline{a}_{j}}}$ can be written as a combination of the generators of $\;I$.\\

We divide the proof in two steps:
\medskip

 \underline{Case $1$:} $\alpha_1+\alpha_2+\alpha_3+\alpha_4=0$. $\;(\alpha_1,\alpha_2,\alpha_3,\alpha_4)\in K_j\subset\Z^{4}$ is a solution of $(\ref{generalform})$ if and only if satisfies the following linear system:

\begin{equation}\label{tat}
\left\{\begin{array}{rl}
\alpha_1+\alpha_2+\alpha_3+\alpha_4=0\;\;\;\;\;\;\;\;\;\;\;\;\;\;\;\;\;\;\;\;\;\;\;\;\; & \;\;\;\;\\ 
a\alpha_2+(a+b)\alpha_3+(p+1)(a+b)\alpha_4=0&\;\;\;\;\\
\end{array}\right.
\end{equation}

It was completely proved in \cite{AdrianoMarzullo} that:

\begin{enumerate}
\item Two indipendent solutions of $(\ref{tat})$ are: $$(\alpha_1,\alpha_2,\alpha_3,\alpha_4)=(-p,0,p+1,-1)\;\;\mbox{and}\;\; (\alpha_1,\alpha_2,\alpha_3,\alpha_4)=(-b,a+b,-a,0)$$
\item Let $c_1,c_2\in\Z.$ Every $(\alpha_1,\alpha_2,\alpha_3,\alpha_4)$ solution of $(\ref{tat})$ can be expressed as $$(\alpha_1,\alpha_2,\alpha_3,\alpha_4)=c_1(-p,0,p+1,-1)+c_2(-b,a+b,-a,0)$$
\item Let $g_1$ and $g_2$ be the binomials corresponding to the two indipendent solutions of $(\ref{tat})$. Then every binomial in $ P_{\mathbf{\underline{a}_{j}}}$ satisfying $\;\alpha_1+\alpha_2+\alpha_3+\alpha_4=0$ is inside the ideal generted by $g_1$ and $g_2$. 
\end{enumerate}
\medskip

 \underline{Case $2$:} $\alpha_1+\alpha_2+\alpha_3+\alpha_4\neq 0$. Without loss of generality we can consider $\alpha_1+\alpha_2+\alpha_3+\alpha_4$ to be positive. Since $c=p(a+b)$ and $\;j=(a+b+c)n-1$, we already saw that $(\ref{generalform})$ becomes $(\ref{jcpab})$, that is:
 
 $$(p+1)(a+b)n\left(\alpha_1+\alpha_2+\alpha_3+\alpha_4\right)+a\alpha_2+(a+b)\alpha_3+(p+1)(a+b)\alpha_4=0$$

 Now, since $\gcd(a,b)=1$, by $(\ref{jcpab})$ we conclude that $\;a+b\;|\;\alpha_2$. Let's denote with
$$ I_2=\left\langle x_3^{p+1}-x_1^{p}x_4, x_1^{n+1}-x_4^{n} \right\rangle \subset I.$$
	
	\begin{itemize}
	
	\item Let $\alpha_2=0$. It was proved in \cite{AdrianoMarzullo} that every binomial $g_i\in \mathcal{M}$ is in $P_{\mathbf{\underline{a}_{j}}}$ for $i\in \left\{1,2,3,4,6,7\right\}$. To give an idea on how we did this, let us show that $g_1\in  P_{\mathbf{\underline{a}_{j}}}$ .
\vspace{2mm}

$\;g_1=x_1^{\alpha_1}-x_3^{\alpha_3}x_4^{\alpha_4}\in P_{\mathbf{\underline{a}_{j}}} \;\Leftrightarrow\;(\alpha_1,\alpha_2,\alpha_3,\alpha_4)=(k_1,0,-k_3,-k_4)\in K_j$
now, by Lemma \ref{general} this is true if and only if $\;(k_1,0,-k_3,-k_4)$ satisfies $\;(\ref{generalform})$
that is 
\begin{equation}\label{newgeneralform1}
\;\;k_1=k_3+k_4+\frac{k_3}{(p+1)n}+\frac{k_4}{n}
\end{equation}

 on the other hand:

\begin{equation}\label{mii1}
0<k_1-k_3-k_4=\frac{k_3}{(p+1)n}+\frac{k_4}{n}=\frac{\frac{k_3}{p+1}+k_4}{n}\in\Z^{+}
\end{equation}
\vspace{1mm}

 If 
$n>\frac{k_3}{p+1}+k_4\;$ then $\;\frac{\frac{k_3}{p+1}+k_4}{n}<1$
 and this is a contradiction. So there is no solution for $\;n>\frac{k_3}{p+1}+k_4\;$.
If $n\leq\frac{k_3}{p+1}+k_4$, then

\begin{equation}\label{mom1}
\frac{k_3}{p+1}+k_4=Hn\;\;\;\mbox{where}\;\;H\in \N.
\end{equation}

 and by $\left(\ref{newgeneralform1}\right)$, we get that
\begin{equation}\label{pop1}
k_1=k_3+k_4+H.
\end{equation}

  \begin{enumerate}
 \item If $\alpha_3=0\;(\alpha_2=0)$, then by $(\ref{mom1})$ and by $(\ref{pop1})$ 
 
$$ g_1=x_1^{H(n+1)}-x_4^{Hn}\in\left\langle x_1^{(n+1)}-x_4^{n} \right\rangle\subset I.$$

\vspace{1mm}
  \item If $\alpha_3\neq 0\;(\alpha_2=0)$, then by $(\ref{mom1})$, we get $\;\frac{k_3}{p+1}\in\Z^{+}$, that is $\;(p+1)\;|\;k_3$. So
write

$$\left(k_1,0,-k_3,-k_4\right)+\left(-\frac{k_3 p}{p+1},0,\frac{(p+1)k_3}{p+1},-\frac{k_3}{p+1}\right)= \left(k_1-\frac{k_3 p}{p+1},0,0,-k_4-\frac{k_3}{p+1}\right).$$

Since 

\begin{equation}\label{pino1}
x_3^{\frac{(p+1)k_3}{p+1}}-x_1^{\frac{k_3 p}{p+1}}x_4^{\frac{k_3}{p+1}}\in \left\langle x_3^{p+1}-x_1^{p}x_4\right\rangle.
\end{equation}

we need to show that 
\begin{equation}\label{psss}
\;\;\;x_1^{k_1-\frac{k_3 p}{p+1}}-x_4^{k_4+\frac{k_3}{p+1}}\in\left\langle x_1^{n+1}-x_4^{n}\right\rangle.
\end{equation}

To prove $(\ref{psss})$ rewrite  $\;(\ref{newgeneralform1})$ as
$$ (p+1)n\left(k_1-k_3-k_4\right)=(p+1)k_4+k_3\;\;\Rightarrow\;\;$$
$$(p+1)nk_1-(p+1)n k_3=(p+1)k_4 +k_3+(p+1)nk_4\;\;\Rightarrow\;\;$$
$$n(p+1)\left(k_1-\frac{k_3 p}{p+1}\right)=nk_3+k_3+(p+1)k_4(n+1)\;\;\Rightarrow\;\; $$

\begin{equation}\label{son1}
n\left(k_1-\frac{pk_3}{p+1}\right)=\left(n+1\right)\left(k_4+\frac{k_3}{p+1}\right).
\end{equation}

 Consider the following map:$$\xi:K[x_1,x_4]\rightarrow K[t]\;\;\;\mbox{where}\;\;\;x_1\mapsto t^{n},\;\;x_4\mapsto t^{n+1}.$$

 Of course, $\;\ker{\xi}= \left\langle x_1^{n+1}-x_4^{n} \right\rangle$ and by $\;\left(\ref{son1}\right)$ 
$$x_1^{k_1-\frac{k_3 p}{p+1}}-x_4^{k_4+\frac{k_3}{p+1}}\in\left\langle x_1^{n+1}-x_4^{n}\right\rangle.$$
and this proves $(\ref{psss})$.

Finally, by $(\ref{pino1})$ and $(\ref{psss})$, considering the map $\pi :R \rightarrow \frac{R}{I_2}$,
it is a computation to show that $g_1=x_1^{k_1}-x_3^{k_3}x_4^{k_4}$ is zero in $\;\frac{R}{I_2}$. This completes the proof for $\alpha_2=0$.
\end{enumerate}

\vspace{2mm}

\item Let $\alpha_2\neq 0$. In \cite{AdrianoMarzullo} we proved that every binomial $g_i\in \mathcal{M}$ is in $P_{\mathbf{\underline{a}_{j}}}$ for $i\in \left\{1,2,3,4,5,6,7\right\}$.
Here, let us prove that $g_2\in  P_{\mathbf{\underline{a}_{j}}}$.
\vspace{2mm}
 
$g_2=x_1^{\alpha_1}x_3^{\alpha_3}-x_2^{\alpha_2}x_4^{\alpha_4}\in  P_{\mathbf{a_{j}}}\;\Leftrightarrow\;(\alpha_1,\alpha_2,\alpha_3,\alpha_4)=(k_1,-k_2,k_3,-k_4)\in K_j.$
where $\;k_1-k_2+k_3-k_4>0$. Since $(a+b)| \alpha_2$, we can consider
$$ \left(k_1,-k_2,k_3,-k_4\right)+\left(-\frac{bk_2}{a+b},\;\frac{\left(a+b\right)k_2}{\left(a+b\right)},\;-\frac{ak_2}{a+b},\;0\right)=$$
$$=\left(k_1-\frac{bk_2}{a+b},\;0,\;k_3-\frac{ak_2}{a+b},\;-k_4\right).
$$
Let
$$\left(k_1^{'},0,k_3^{'},k_4^{'}\right)=\left(k_1-\frac{bk_2}{a+b},\;0,\;k_3-\frac{ak_2}{a+b},\;-k_4\right). $$
Now, it is hard to predict the signs of $\;k_{1}^{'}$ and $\;k_{3}^{'}$ because they could be either positive or negative. On the other hand, we can certainly 
say that they cannot be both negative $($actually in this case we do not have a binomial anymore$)$. Then we have three possibilities, and in all of them the binomial associated to $\;\left(k_1^{'},0,k_3^{'},k_4^{'}\right)$ is in $\;I$ for what we have done above. Moreover, it is just a computation to prove that

 \begin{equation}\label{a4}
 x_2^{\frac{\left(a+b\right)k_2}{\left(a+b\right)}}-x_1^{\frac{bk_2}{a+b}}x_3^{\frac{ak_2}{a+b}}\in 
 \left\langle   x_2^{a+b}-x_1^{b}x_3^{a}\right\rangle
 \end{equation}
\vspace{2mm}
 
\begin{claim}
$\;\;\;x_1^{k_1}x_3^{k_2}-x_2^{k_3}x_4^{k_4}\in I$
\end{claim}

 \begin{proof}
 Consider the map $\;\pi: R\rightarrow R/I$.
 \vspace{1mm}
 
\begin{enumerate} 
\item If $\;k_1^{'}<0$ and $\;k_3^{'}>0$ , then as mentioned above

\begin{equation}\label{b1}
 x_3^{k_3-\frac{ak_2}{a+b}}-x_1^{\frac{bk_2}{a+b}-k_1}x_4^{k_4}\in I_2 \subset I
\end{equation}

By $\;(\ref{a4})$, we can write
$$
 g_2=x_1^{k_1}x_3^{k_3}-x_2^{k_2}x_4^{k_4}+ x_2^{k_2}-x_1^{\frac{bk_2}{a+b}}x_3^{\frac{ak_2}{a+b}}+C_3\left(x_2^{a+b}-x_1^{b}x_3^{a}\right)
$$
where $\;C_3\in R$.
In $\;\frac{R}{I}$, we  have that $\;x_2^{k_2}=x_1^{\frac{bk_2}{a+b}}x_3^{\frac{ak_2}{a+b}}$. So the last equation becomes:
$$g_2=x_1^{k_1}x_3^{k_3}-\left(x_1^{\frac{bk_2}{a+b}}x_3^{\frac{ak_2}{a+b}}\right)x_4^{k_4}+ C_3\left(x_2^{a+b}-x_1^{b}x_3^{a}\right)$$
then
$$g_2=x_1^{k_1}x_3^{\frac{ak_2}{a+b}}\left(x_3^{k_3-\frac{ak_2}{a+b}}-x_1^{\frac{bk_2}{a+b}-k_1}x_4^{k_4}\right)+ C_3\left(x_2^{a+b}-x_1^{b}x_3^{a}\right) $$
then by $\;(\ref{b1})$, we get
$$g_2=x_1^{k_1}x_3^{\frac{ak_2}{a+b}}\left[C_1\left(x_3^{p+1}-x_1^{p}x_4\right)+C_2\left(x_1^{n+1}-x_4^{n}\right)\right]+ C_3\left(x_2^{a+b}-x_1^{b}x_3^{a}\right)$$
where $\;C_1,C_2\in R$.
\vspace{2mm}

\item For $k_1^{'}>0$ and $ k_3^{'}<0$ or  $\;k_1^{'}>0$ and $ k_3^{'}>0$, similar proofs are done in \cite{AdrianoMarzullo}.
\end{enumerate}
\vspace{2mm}

This concludes the proof of (i).
\end{proof}
 \end{itemize}
\vspace{3mm}


\textbf{(ii)} $\;$ Let  $j=(a+b+c)n+(a+b)t,\;c=p(a+b),\;1\leq t\leq p\;$ where $\;n,t,p\in\Z^{+}$. Again assume that
 $\gcd(a,b)=1$. 
 \vspace{1mm}
 
 Recall that $ S_{\mathbf{\underline{a}}_j}=\left\langle j,\;a+j,\;a+b+j,\;a+b+c+j\right\rangle$
where
$$\;j=(p+1)(a+b)n+(a+b)t=(a+b)[(p+1)n+t].$$
 
 Let $L:=[(p+1)n+t]\in \N$, then rewrite:
 
 $$ S_{\mathbf{\underline{a}}_j}=\left\langle \textsf{a}_1,\textsf{a}_2,\textsf{a}_3,\textsf{a}_4\right\rangle=$$
  \begin{equation}\label{TAMPA}
 =\left\langle \;(a+b)L,\;(a+b)L+a,\;(a+b)(L+1),\;(a+b)(L+p+1)\right\rangle
 \end{equation}

 Let $$ I=\left\langle x_1^{n+t+1}-x_3^{t}x_4^{n}, \;x_2^{a+b}-x_1^{b}x_3^{a}, \;x_3^{p+1}-x_1^{p}x_4, \;x_4^{n+1}-x_1^{n+t-p+1}x_3^{p-t+1}\right\rangle$$

 It is an computation to show that $I\subset P_{\mathbf{\underline{a}}_j}.$ We need to show the other inclusion. 
 \medskip
 
  First, it was proved in \cite{AdrianoMarzullo} that:
  $$I=\left\langle f_1,\;f_2,\;f_3,\; f_4\right\rangle$$
   where $f_i$ is the critical binomial with respect $x_i, \;i=1,2,3,4$. 
   To give the reader an idea on how this was done, let us show that:
   
   \begin{lemma}\label{f3}
$\;f_3=x_3^{p+1}-x_1^{p}x_4$.
\end{lemma}
\begin{proof}
By Definition \ref{papino}, we need to show that $p+1$ is the minimal integer such that 
\begin{equation}\label{p+1}
(p+1)\textsf{a}_3=k_1\textsf{a}_1+k_2\textsf{a}_2+k_4\textsf{a}_4\;\;\;\;\;\;\;\;\;k_1,k_2,k_4\in\Z^{+}
\end{equation}
\vspace{1mm}

It is proved in \cite{AdrianoMarzullo} that:
\begin{claim}\label{k2=0}
If $\alpha$ is the minimal positive integer such that 
\begin{equation}\label{s3p}
\alpha \textsf{a}_3=k_1\textsf{a}_1+k_2\textsf{a}_2+k_4\textsf{a}_4
\end{equation}
where $\;\;k_1,k_2,k_4\in\Z^{+},\;\;$ then $\;k_2=0$.\qed
\end{claim}
\vspace{1mm}

Let $\;\alpha<(p+1)$ and suppose that for every $n\in\N$ there exist $k_1,k_4\in\Z^{+}$ such that
\begin{equation}\label{s3p1}
\alpha \textsf{a}_3=k_1\textsf{a}_1+k_4\textsf{a}_4
\end{equation}
\vspace{2mm}

It is proved in \cite{AdrianoMarzullo} that:

\begin{claim}
In $\;(\ref{s3p1})$, if $\;\alpha<(p+1)\;$ then $\;k_4\geq 1.$ \qed
\end{claim}

So far, we got that $(\ref{s3p1})$  holds when $k_4 \geq 1$. 
Now, $(\ref{s3p1})$ can be rewritten as:
\begin{equation}\label{s3p3}
\alpha(L+1)=k_1L+k_4(L+p+1)
\end{equation}
$(\ref{s3p3})\;$  implies:
\begin{enumerate}
	\item $\;L(\alpha-k_1-k_4)=k_4(p+1)-\alpha\;\;\;\Longrightarrow\;\;\;$
	\begin{equation}\label{s3p4}
	\alpha>k_1+k_4
	\end{equation}
since, $\;k_4>1\;$ and $\;\alpha<p+1$.
\vspace{2mm}

\item $\;\alpha[(p+1)n+t+1]=k_1[(p+1)n+t]+k_4[(p+1)n+t+(p+1)]\;$ that is:
$$(p+1)[(\alpha-k_1-k_4)n-k_4]=-[(\alpha-k_1-k_4)t]-\alpha <0$$
because of $(\ref{s3p4})$. Then we get that
$$ (\alpha-k_1-k_4)n-k_4<0\;\;\Longrightarrow\;\;k_4>n(\alpha-k_1-k_4)>n\;\;\Longrightarrow$$
\begin{equation}\label{s3p5}
\;\; k_4>n
\end{equation}
Finally, by $(\ref{s3p4})$ and $(\ref{s3p5})$, we get
$$ \alpha>k_1+k_4>n\;\;\;\Longrightarrow\;\;\;n<\alpha<p+1.$$
In conclusion, what we got is that $(\ref{s3p1})\;$ does not hold for $n\geq p+1$, and this is a contradiction.
\end{enumerate}
\end{proof}

   
Second, consider now the following isomorphism $\delta:\;R\longrightarrow R$ given by $\;x_1\mapsto x_1$, $x_2\mapsto x_4$, $x_3\mapsto x_3$, $x_4\mapsto x_2$.
Let $\mathcal{I}=\delta\left(I\right)=\left\langle \mathcal{F}_1, \mathcal{F}_2, \mathcal{F}_3, \mathcal{F}_4 \right\rangle$ where:

$$
\mathcal{F}_1=\delta\left(f_1\right)=x_1^{n+t+1}-x_3^{t}x_2^{n} $$
$$\mathcal{F}_4=\delta\left(f_2\right)=x_4^{a+b}-x_1^{b}x_3^{a}$$
$$\mathcal{F}_3=\delta\left(f_3\right)=x_3^{p+1}-x_1^{p}x_2$$
$$\mathcal{F}_2=\delta\left(f_4\right)=x_2^{n+1}-x_1^{n+t-p+1}x_3^{p-t+1}.$$
\vspace{1mm}

Since the binomials $\mathcal{F}_i$ are critical, by Proposition 3.4 in \cite{Alcil} 
we can conclude that $\mathcal{I}$ is prime and so is I.
\medskip

Finally, consider the ideal $J$ generated by:
$$
J=\left\langle x_1^{\textsf{a}_2}-x_2^{\textsf{a}_1},\;x_1^{\textsf{a}_3}-x_3^{\textsf{a}_1},\;x_1^{\textsf{a}_4}-x_4^{\textsf{a}_1},\;x_2^{\textsf{a}_3}-x_3^{\textsf{a}_2},\;x_2^{\textsf{a}_4}-x_4^{\textsf{a}_2},\;x_3^{n_4}
-x_4^{n_3}\right\rangle
$$
By Corollary 10.1.10 in \cite{MonoAlg} we know that $\sqrt{J}=P_{\mathbf{\underline{a}}_j}$. To prove that $I\supset J$, we proved in \cite{AdrianoMarzullo} that each generator of $J$ is inside $I$. Consider the quotient ring $R/I$. Then, for $i,j\in\left\{1,2,3,4\right\}$:
  
$$x_i^{\textsf{a}_j}-x_j^{\textsf{a}_i}\in I \Longleftrightarrow x_i^{\textsf{a}_j}-x_j^{\textsf{a}_i}=0 \;\;\mbox{in}\;\; R/I \Longleftrightarrow x_i^{\textsf{a}_j}=x_j^{\textsf{a}_i}\;\;\;\mbox{in}\;\; R/I.$$
\vspace{1mm}

 To have an idea of how this proof goes,  show that $x_1^{\textsf{a}_2}-x_2^{\textsf{a}_1}\in I$.
\medskip

 By $(\ref{TAMPA})$:  $\;\;\;x_1^{\textsf{a}_2}=x_2^{\textsf{a}_1}\;\;\Leftrightarrow\;\; x_1^{(a+b)\left[(p+1)n+t\right]+a}=x_2^{(a+b)[(p+1)n+t]}\;\;\Leftrightarrow$
	
	$x_1^{(a+b)\left[(p+1)n+t\right]+a}=(x_1^bx_3^a)^{[(p+1)n+t]}\;\;\Leftrightarrow\;\;x_1^{a\left[pn+n+t+1\right]}=x_3^{a[(p+1)n+t]}\;\;\Leftrightarrow$
	
	$x_1^{apn}\left(x_1^{n+t+1}\right)^a=\left(x_3^{(p+1)}\right)^{an}x_3^{at}\;\;\Leftrightarrow\;\; x_1^{apn}\left(x_3^{t}x_4^{n}\right)^a=\left(x_1^px_4\right)^{an}x_3^{at}.$
	\vspace{6mm}

 So far we proved that not only the ideal $I$ is prime but also that $I\supset J$. 
By Corollary 10.1.10 in \cite{MonoAlg}, we know that $\sqrt{J}=P_{\mathbf{\underline{a}}_j}$. On the other hand, by definition:
$$P_{\mathbf{\underline{a}}_j}=\sqrt{J}=\bigcap_{P\supset J} P \subset I $$

 and this concludes our proof.

\vspace{3mm}


 \textbf{(iii)} $\;$ Let $a=p(b+c),\;j=(a+b+c)n+(b+c)t,\;1\leq t\leq p\;$ where $\;n,t,p\in\Z^{+}$. Without loss of generality we can assume that 
 $\gcd(b,c)=1$. Recall that the arithmetic semigroup we are considering is given by 
$$ S_{\mathbf{a}_j}=\left\langle j,\;a+j,\;a+b+j,\;a+b+c+j\right\rangle $$
where
$$\;j=(p+1)(b+c)n+(b+c)t=(b+c)[(p+1)n+t]$$
 now let $L:=[(p+1)n+t]\in \N$, then rewrite:
  $$S_{\mathbf{a}_j}=\left\langle \textsf{a}_1,\textsf{a}_2,\textsf{a}_3,\textsf{a}_4\right\rangle= $$
    \begin{equation}\label{Americus}
     =\left\langle (b+c)L,(b+c)(L+p),(b+c)(L+p)+b,(b+c)(L+p+1)\right\rangle
 \end{equation}

Let $$ I= \left\langle x_1^{n+2}-x_2^{p+1-t}x_4^{n+t-p}, \;x_2^{p+1}-x_1x_4^{p}, \;
x_3^{b+c}-x_2^{c}x_4^{b},\; x_4^{n+t}-x_1^{n+1}x_2^{t} \right\rangle. $$
 It is an computation to show that $I\subset P_{\mathbf{\underline{a}}_j}.$ We need to show the other inclusion. Similarly to (ii), it was proven in \cite{AdrianoMarzullo} that:
\begin{itemize}
	\item $I=\left\langle f_1,\; f_2,\; f_3,\; f_4\right\rangle$;
	\vspace{2mm}
	
	\item $I$ is a prime ideal;
	\vspace{2mm}
	
	\item $I\supset J=\left\langle x_1^{\textsf{a}_2}-x_2^{\textsf{a}_1},\;x_1^{\textsf{a}_3}-x_3^{\textsf{a}_1},\;x_1^{\textsf{a}_4}-x_4^{\textsf{a}_1},\;x_2^{\textsf{a}_3}-x_3^{\textsf{a}_2},\;x_2^{\textsf{a}_4}-x_4^{\textsf{a}_2},\;x_3^{n_4}
-x_4^{n_3}\right\rangle$
\end{itemize}
\vspace{2mm}

By Corollary 10.1.10 in \cite{MonoAlg}, we know that $\sqrt{J}=P_{\mathbf{\underline{a}}_j}$. On the other hand, by definition:
$$P_{\mathbf{\underline{a}}_j}=\sqrt{J}=\bigcap_{P\supset J} P \subset I $$

 and this concludes our proof.

\end{proof}
\vspace{3mm}

 \begin{thmb}

Let $\mathbf{a}=\left(a,b,c\right)\in\N^{3}$.
Let $p,n,r\in \N$, $\;j=(a+b+c)n+r$ and $a=p(b+c)$ or $c=p(a+b)$. Let
 \begin{equation}\label{vapensiero}
\mathbf{\underline{a}}_{j} =\left(j,a+j,a+b+j, a+b+c+j\right)\in \N^{4}
\end{equation}

 Then for $j\geq \left(a+b+c\right)^3$, the Srinivasan's semigroups rings $k[S_{\mathbf{\underline{a}}_j}]$ associated with the collection of monomial prime ideals $P_{\mathbf{\underline{a}}_j}$
is a complete intersection if and only if $(a+b+c)|j$. In particular, in the family of the Srinivasan's semigroups rings the complete intersection appears eventually with period $a+b+c$.

\end{thmb}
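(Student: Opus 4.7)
The plan is to reformulate the complete intersection condition in terms of the minimal number of generators $\mu$ and then invoke Theorem 1 together with a critical-binomial analysis.

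First, I reduce the claim to a Betti number statement. The monomial curve $\Gamma_{\mathbf{\underline{a}}_j} \subset \A^4$ has codimension $3$, so $\Hht(P_{\mathbf{\underline{a}}_j}) = 3$. By Definition \ref{ci}, $P_{\mathbf{\underline{a}}_j}$ is a complete intersection if and only if $\mu(P_{\mathbf{\underline{a}}_j}) = 3$. This converts the problem into a computation of first Betti numbers that can be attacked using Theorem 1.

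For the $(\Leftarrow)$ direction, if $(a+b+c) \mid j$ I write $j = (a+b+c)\, n$. The hypothesis $j \geq (a+b+c)^3$ forces $n \geq (a+b+c)^2 \geq 1$, so Theorem 1 part (i) applies and yields $\mu(P_{\mathbf{\underline{a}}_j}) = 3$, hence a complete intersection. For the $(\Rightarrow)$ direction I argue the contrapositive: writing $j = (a+b+c) n + r$ with $0 \leq r < a+b+c$, I want to show that $r \neq 0$ implies $\mu(P_{\mathbf{\underline{a}}_j}) \geq 4$. By Proposition \ref{Worcester}, the full critical set $\{f_1, f_2, f_3, f_4\}$ is part of a minimal generating set of $P_{\mathbf{\underline{a}}_j}$, so $\mu \geq 4$ holds whenever these four binomials are pairwise distinct up to sign. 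In the subcases $r = (a+b) t$ (under $c = p(a+b)$) and $r = (b+c) t$ (under $a = p(b+c)$), with $1 \leq t \leq p$, Theorem 1 parts (ii) and (iii) respectively exhibit the $f_i$'s with pairwise distinct monomial support, so $\mu = 4$. For the remaining residues, I would replicate the critical-binomial computations of Theorem 1: the bound $j \geq (a+b+c)^3$ stabilizes each exponent $\alpha_i$ as a linear function of $n$ determined by $r$, and a case analysis over the possible support pairs $\{x_i, x_\ell\}$ then shows that two $f_i$'s can coincide (the only way $\mu$ could drop to $3$) only when $r = 0$.

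Finally, the periodicity of the complete intersections is automatic once the equivalence is established: the condition $(a+b+c) \mid j$ is periodic in $j$ with period $a+b+c$, so the complete intersections in $\{P_{\mathbf{\underline{a}}_j}\}_{j \geq (a+b+c)^3}$ form the arithmetic progression $\{(a+b+c)\, n : n \geq (a+b+c)^2\}$. \emph{The main obstacle} is the residue-by-residue verification for those $r$ not covered by Theorem 1 parts (ii)--(iii); for each such $r$ one must compute the four critical binomials explicitly and confirm that they remain pairwise distinct, extending the arguments of Theorem 1 to the full range of residue classes modulo $a+b+c$.
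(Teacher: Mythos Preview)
Your $(\Leftarrow)$ direction agrees with the paper. The gap is in the converse. Your plan is to check, residue by residue, that the four critical binomials remain pairwise distinct whenever $r\neq 0$; you then invoke Theorem~1~(ii),~(iii) for the residues $r=(a+b)t$ or $r=(b+c)t$ and leave the remaining residues as an admitted ``main obstacle''. That obstacle is real: those leftover residues are the generic ones, and nothing in Theorem~1 covers them, so the proposal as written is incomplete.

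The paper avoids the residue-by-residue computation entirely by arguing structurally. Assume $P_{\mathbf{\underline{a}}_j}$ is a complete intersection with $r>0$ (say in the case $a=p(b+c)$). By Theorem~3.1 of \cite{Alcil}, two of the critical binomials must coincide up to sign. A short check shows that in $f_2$ and $f_3$ at least two of the exponents $\alpha_{ij}$ are nonzero, so neither can equal $-f_i$ for another $i$; hence necessarily $f_1=-f_4$. This forces one of the triples $\langle \textsf{a}_1,\textsf{a}_2,\textsf{a}_4\rangle$ or $\langle \textsf{a}_1,\textsf{a}_3,\textsf{a}_4\rangle$ to define a complete intersection in three variables, and now the Herzog--Srinivasan characterization (Lemma~\ref{HerzogSrinivasan}) applies. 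In the first subcase it pins down $\alpha_2=p+1$, hence $(p+1)\mid r$, say $r=(p+1)r'$ with $0<r'<b+c$, and gives $f_1=x_1^{(n+1)(b+c)+r'}-x_4^{n(b+c)+r'}$; this contradicts Lemma~\ref{italiafuori3}, which bounds $\alpha_1<(n+1)(b+c)$ once $n\geq (b+c)^2$. In the second subcase a $\gcd$/$\lcm$ computation yields $\alpha_3>b^*+c^*\geq b+c$ (with $b^*,c^*$ the coprime parts of $b,c$), again a contradiction. The bound $j\geq (a+b+c)^3$ is what guarantees $n\geq (b+c)^2$ so that Lemma~\ref{italiafuori3} applies. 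Thus the key step you are missing is the reduction via \cite{Alcil} to a three-variable complete intersection, after which Lemma~\ref{HerzogSrinivasan} handles all residues at once rather than one at a time.
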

 
 \begin{proof}
 If $r=0$ then $j=(a+b+c)n$, so we are under the ipotheses of Theorem $1. (1)$ and by the Definition \ref{ci}, the collection of defining ideal 
 $P_{\mathbf{\underline{a}}_{j}}$ associated is a family of complete intersection ideals. We need to prove the converse: if $P_{\mathbf{\underline{a}}_{j}}$ is a family of complete intersection ideals then $r=0$. 
 \medskip

 Let $\;a=p(b+c)$. In this case,  by $(\ref{vapensiero})$
the arithmetic semigroup associated with the shifted sequence 
is given by $S_{\mathbf{\underline{a}}_{j}}=\left\langle \textsf{a}_1,\;\textsf{a}_2,\;\textsf{a}_3,\;\textsf{a}_4\right\rangle$, where:
 \begin{align*}
  \textsf{a}_1=&j=(p+1)(b+c)n+r\\
  \textsf{a}_2=&j+a=(p+1)(b+c)n+p(b+c)+r\\
  \textsf{a}_3=&j+a+b=(p+1)(b+c)n+p(b+c)+b+r\\
  \textsf{a}_4=&j+a+b+c=(p+1)(b+c)n+(p+1)(b+c)+r
  \end{align*}
 
 \medskip
 
 To prove Theorem $2$ for $a=p(b+c)$, we will use the following Lemma proved in \cite{AdrianoMarzullo}
 \begin{lemma}\label{italiafuori3}
Let $\textsf{a}_1,\textsf{a}_2,\textsf{a}_3,\textsf{a}_4$ as above.  If $r^{'}>0$ and $n\geq (b+c)^2$, then there is 
a relation, $\;\gamma_1\textsf{a}_1=\gamma_3\textsf{a}_3+\gamma_4\textsf{a}_4$ with all $\gamma_1,\;\gamma_3$ and $\gamma_4$ positive numbers, and $\gamma_1\leq c(n+1)\leq (b+c)(n+1)$. In particular, in the critical binomial

$\;f_1=x_1^{\alpha_1}-x_2^{\alpha_{12}}x_3^{\alpha_{13}}x_4^{\alpha_{14}}$, we have that $\alpha_1<(n+1)(b+c)$ \qed
\end{lemma}

\vspace{2mm}

  By $(\ref{generalform})$, we have that $ x_2^{p+1}-x_1x_4^{p},\;\;x_3^{b+c}-x_2^{c}x_4^{b}\in P_{\mathbf{\underline{a}}_{j}}.$ Moreover it is a computation to prove that:
  
  \begin{enumerate}
  \item In $f_3=x_3^{\alpha_3}-x_1^{\alpha_{31}}x_3^{\alpha_{32}}x_4^{\alpha_{34}} $
   at least two of $\alpha_{31}, \alpha_{32}, \alpha_{34}$ are not zero. As a consequence of this, we get 
   $\; f_3\neq -f_i,\;i\in\left\{1,2,4\right\}$.
 
 \medskip

  \item In $f_2=x_2^{\alpha_2}-x_1^{\alpha_{21}}x_3^{\alpha_{23}}x_4^{\alpha_{24}}$
  then at least two of $\alpha_{21}, \alpha_{23}, \alpha_{24}$ are not zero. As a consequence of this,  we get 
  $\; f_2\neq -f_i, \; i\in\left\{1,3,4\right\}$.
 \end{enumerate}

  By Proposition \ref{Worcester}, we know that $\left\{f_1,f_2,f_3,f_4\right\}$, a full set of critical binomials in $R$, is part of a minimal set of generators of $P_{\mathbf{\underline{a}}_j}$. Now, if the monomial prime ideal $P_{\mathbf{a}_j}$ is a complete intersection ideal, by Theorem $3.1$ in \cite{Alcil}, we must have two of the critical binomials equal up to change of sign. So we have that:
 \begin{equation}\label{industry}
 f_1=-f_4
 \end{equation}  
 
  Now, if $P_{\mathbf{\underline{a}}_j}$ is a complete intersection ideal in $R$ then  by Lemma \ref{HerzogSrinivasan} either
 
\begin{enumerate}
	\item $\mathbf{a}=\left\langle \textsf{a}_1,\textsf{a}_2,\textsf{a}_4\right\rangle$ defines a complete intersection ideal in $k[x_1,x_2,x_4]$ with 
	$\left\{f_1=-f_4,\; f_2\right\}$ as set of minimal generators \textemdash $\;$  it is a computation to modify Lemma \ref{general} for $n=3$ and prove that $f_1, f_2\in P_{\mathbf{a}}$, then by Proposition \ref{Worcester}, we have $P_{\mathbf{a}}=\left\langle f_1,f_2\right\rangle$ \textemdash $\;$ or
	\vspace{1mm}

	\item $\mathbf{a}=\left\langle \textsf{a}_1,\textsf{a}_3,\textsf{a}_4\right\rangle$ defines a complete intersection in $k[x_1,x_3,x_4]$ with 
	$\left\{f_1=-f_4,\; f_3\right\}$ as set of minimal generators \textemdash $\;$  again, it is a computation to modify Lemma \ref{general} for $n=3$ and prove that $f_1, f_3\in P_{\mathbf{a}}$, then by Proposition \ref{Worcester}, we have $P_{\mathbf{a}}=\left\langle f_1,f_3\right\rangle$.
	
\end{enumerate}

\medskip

 Let us analyze these cases separately.
\medskip

\begin{enumerate}
\item  If $\left\langle \textsf{a}_1,\textsf{a}_2,\textsf{a}_4\right\rangle$ defines a complete intersection ideal then by Lemma \ref{HerzogSrinivasan}, 
we have:
 
\begin{equation}\label{italiafuori1}
\alpha_2=\gcd\left(\;(p+1)(b+c)n+r,\;p(b+c)+(b+c)\;\right)=\gcd\left(\;r,\;(p+1)(b+c)\;\right)
\end{equation}
\vspace{1mm}

Moreover in this case:
$$ t=\gcd\left(p(b+c),\;b+c\right)=b+c. $$

Finally, again by Lemma \ref{HerzogSrinivasan}:

\begin{equation}\label{italiafuori2}
\alpha_2=\frac{p(b+c)+(b+c)}{b+c}=p+1
\end{equation}

 By $(\ref{italiafuori2})$, we have not only that $\;f_2=x_2^{p+1}-x_1x_4^{p}\;$ but also $($by $(\ref{italiafuori1}))$:

$$\gcd\left(r,\;(p+1)(b+c)\right)=p+1\;\;\Rightarrow\;\; (p+1)| r.$$

 Then:
$$r=(p+1)r^{'}<(p+1)(b+c)\;\;\Rightarrow\;\;\frac{r}{p+1}=r^{'}<b+c.$$ 

 Now, rewriting $\textsf{a}_1, \textsf{a}_2, \textsf{a}_3 $ and $\textsf{a}_4$ as:

\begin{align*}
  \textsf{a}_1=&(p+1)\left[n(b+c)+r^{'}\right]\\
  \textsf{a}_2=&(p+1)\left[n(b+c)+r^{'}\right]+p(b+c)\\
  \textsf{a}_3=&(p+1)\left[n(b+c)+r^{'}\right]+p(b+c)+b\\
  \textsf{a}_4=&(p+1)\left[(n+1)(b+c)+r^{'}\right]
  \end{align*}

 by Lemma \ref{HerzogSrinivasan}, we have that:

\begin{equation}\label{nonsoniente}
f_1=x_1^{(n+1)(b+c)+r^{'}}-x_4^{n(b+c)+r^{'}}
\end{equation}

  and this is a contradiction because of the Lemma \ref{italiafuori3}
 

 
\item  If $\left\langle \textsf{a}_1,\textsf{a}_3,\textsf{a}_4\right\rangle$ defines a complete intersection ideal then by Lemma \ref{HerzogSrinivasan}, 
we have:

\begin{equation}\label{italiafuori1222}
\alpha_3=\gcd\left((p+1)(b+c)n+r,\;p(b+c)+(b+c)\right)=\gcd\left(r,\;(p+1)(b+c)\right)
\end{equation}

 Moreover in this case:
$$ t=\gcd\left(p(b+c)+b,\;c\right)=\gcd\left((p+1)b+pc,\;c\right)=\gcd\left((p+1)b,\;c\right)$$

 Let $q=\gcd\left(b,c\right)$ then:
 $$ t=q\cdot \gcd\left((p+1)b^{*},\;c^{*}\right)$$
 
  where $\gcd\left(b^{*}, c^{*}\right)=1$.

 So Lemma \ref{HerzogSrinivasan} implies:

$$ \alpha_3=\frac{(p+1)(b+c)}{q\cdot \gcd\left((p+1)b^{*},\;c^{*}\right)}\\
         =\frac{q\cdot (p+1)(b^{*}+c^{*})}{q\cdot \gcd\left((p+1)b^{*},\;c^{*}\right)}=\frac{(p+1)(b^{*}+c^{*})}{\gcd\left((p+1)b^{*},\;c^{*}\right)}$$

 that is:

$$\alpha_3= \frac{(p+1)(b^{*}+c^{*})}{\gcd\left((p+1)b^{*},\;c^{*}\right)}=\frac{(p+1)(b^{*}+c^{*})}{(p+1)b^{*}c^{*}}\cdot \lcm \left((p+1)b^{*},\;c^{*}\right)\;\;\Rightarrow$$

$$ \alpha_3=\frac{(b^{*}+c^{*})}{b^{*}c^{*}}\cdot \lcm \left((p+1)b^{*},\;c^{*}\right)\;\;\Rightarrow$$

$$\alpha_3=\frac{(b^{*}+c^{*})}{b^{*}c^{*}}\cdot \lcm \left((p+1)b^{*},\;c^{*}\right)\geq \frac{(b^{*}+c^{*})}{b^{*}c^{*}}\cdot(p+1)b^{*}c^{*}=(b^{*}+c^{*})(p+1)>b^{*}+c^{*} \;\;\Rightarrow$$

$$\alpha_3> b^{*}+c^{*}$$ 

 and this a contradiction.
\end{enumerate}
\medskip
\medskip

The proof of the case $\;c=p(a+b)$ follows the same steps and it was completely done in \cite{AdrianoMarzullo}. 
\end{proof}
\vspace{3mm}
\newpage
\section{Example $1$}
\vspace{2mm}

 Let $\;\mathbf{a}=(a,b,c)=(a,b,a+b)=(2,3,5)$.  The leading member of the family is:
$$ \mathbf{\underline{a}}=\left(1,\;1+a,\;1+a+b,\;1+a+b+c\right)=(1,\;3,\;6,\;11)$$
 and:
$$ \mathbf{\underline{a}_j}=(j,\;3+j,\;6+j,\;11+j)\;\;\;\mbox{where}\;\;\;j\in \N_{\geq 1}.$$

\vspace{1mm}
  For each $j$, we compute the ranks of the free modules in the minimal free resolution of $P_{\mathbf{\underline{a}_j}}$. 
  Then by analyzing the results obtaining by running the Macaulay $2$ computer program we get the period $T$ starts at $j=22$ and $T=a+b+c=2(a+b)=10$:
\vspace{5mm}
 
\begin{table}[h!]
\caption{Example $1$} 
	\centering
		\begin{tabular}{|c|c|c|}
		\hline
		\vdots                                          &		                                                  &\\
$j=22\;\rightarrow\;(1,\;\mathbf{6},\; 9,\;4,\;0) $ & $j=32\;\rightarrow\;(1,\;\mathbf{6},\; 9,\;4,\;0) $ & $j=42\;\rightarrow\;(1,\;\mathbf{6},\; 9,\;4,\;0)$\\
$j=23\;\rightarrow\;(1,\;\mathbf{4},\; 5,\;2,\;0) $ & $j=33\;\rightarrow\;(1,\;\mathbf{4},\; 5,\;2,\;0) $ & $j=43\;\rightarrow\;(1,\;\mathbf{4},\; 5,\;2,\;0)$\\
$j=24\;\rightarrow\;(1,\;\mathbf{4},\; 5,\;2,\;0) $ & $j=34\;\rightarrow\;(1,\;\mathbf{4},\; 5,\;2,\;0) $ & $j=44\;\rightarrow\;(1,\;\mathbf{4},\; 5,\;2,\;0)$\\
$j=25\;\rightarrow\;(1,\;\mathbf{4},\; 5,\;2,\;0) $ & $j=35\;\rightarrow\;(1,\;\mathbf{4},\; 5,\;2,\;0) $ & $j=45\;\rightarrow\;(1,\;\mathbf{4},\; 5,\;2,\;0)$\\
$j=26\;\rightarrow\;(1,\;\mathbf{6},\; 8,\;3,\;0) $ & $j=36\;\rightarrow\;(1,\;\mathbf{6},\; 9,\;4,\;0) $ & $j=46\;\rightarrow\;(1,\;\mathbf{6},\; 9,\;4,\;0)$\\
$j=27\;\rightarrow\;(1,\;\mathbf{4},\; 5,\;2,\;0) $ & $j=37\;\rightarrow\;(1,\;\mathbf{4},\; 5,\;2,\;0) $ & $j=47\;\rightarrow\;(1,\;\mathbf{4},\; 5,\;2,\;0)$\\
$j=28\;\rightarrow\;(1,\;\mathbf{6},\; 9,\;4,\;0) $ & $j=38\;\rightarrow\;(1,\;\mathbf{6},\; 9,\;4,\;0) $ & $j=48\;\rightarrow\;(1,\;\mathbf{6},\; 9,\;4,\;0)$\\
$j=29\;\rightarrow\;(1,\;\mathbf{3},\; 3,\;1,\;0) $ & $j=39\;\rightarrow\;(1,\;\mathbf{3},\; 3,\;1,\;0) $ & $j=49\;\rightarrow\;(1,\;\mathbf{3},\; 3,\;1,\;0)$\\
$j=30\;\rightarrow\;(1,\;\mathbf{6},\; 9,\;4,\;0) $ & $j=40\;\rightarrow\;(1,\;\mathbf{6},\; 9,\;4,\;0) $ & $j=50\;\rightarrow\;(1,\;\mathbf{6},\; 9,\;4,\;0)$\\
$j=31\;\rightarrow\;(1,\;\mathbf{4},\; 5,\;2,\;0) $ & $j=41\;\rightarrow\;(1,\;\mathbf{4},\; 5,\;2,\;0) $ & $j=51\;\rightarrow\;(1,\;\mathbf{4},\; 5,\;2,\;0)$\\
                                                    &                                                     & \vdots \\ 
\hline
		\end{tabular}
 \end{table}
 \vspace{3mm}
 
 Note that the complete intersection happens at $j=29$, $j=29+T=29+10=39$ and $j=29+2T=29+20=49$.
\vspace{1mm}

\newpage
\section{Example $2$} 
\vspace{2mm}

Let $\mathbf{a}=(a,b,c)=(3(b+c),b,c)=(12,3,1)$. The leading member of the family is:
$$ \mathbf{\underline{a}}=\left(1,\;1+a,\;1+a+b,\;1+a+b+c\right)=(1,\;13,\;16,\;17)$$
 and:
$$ \mathbf{\underline{a}_j}=(j,\;13+j,\;16+j,\;17+j)\;\;\;\mbox{where}\;\;\;j\in \N_{\geq 1}.$$
\vspace{1mm}

 For each $j$, we compute the ranks of the free modules in the minimal free resolution of $P_{\mathbf{\underline{a}_j}}$.
 Then by analyzing the results obtaining by running the Macaulay $2$ computer program we get the period $T$ starts at $j=65$ and $T=a+b+c=4(b+c)=16$:
\vspace{5mm}

 \begin{table}[h!]
\caption{Example $2$} 
	\centering
		\begin{tabular}{|c|c|c|}
		\hline
		\vdots                                          &		                                                  &\\
$j=65\;\rightarrow\;(1,\;\mathbf{5},\; 5,\;1,\;0) $ & $j=81\;\rightarrow\;(1,\;\mathbf{5},\; 6,\;2,\;0) $ & $j=97\;\rightarrow\;(1,\;\mathbf{5},\; 6,\;2,\;0)$\\
$j=66\;\rightarrow\;(1,\;\mathbf{5},\; 6,\;2,\;0) $ & $j=82\;\rightarrow\;(1,\;\mathbf{5},\; 6,\;2,\;0) $ & $j=98\;\rightarrow\;(1,\;\mathbf{5},\; 6,\;2,\;0)$\\
$j=67\;\rightarrow\;(1,\;\mathbf{4},\; 5,\;2,\;0) $ & $j=83\;\rightarrow\;(1,\;\mathbf{4},\; 5,\;2,\;0) $ & $j=99\;\rightarrow\;(1,\;\mathbf{4},\; 5,\;2,\;0)$\\
$j=68\;\rightarrow\;(1,\;\mathbf{5},\; 6,\;2,\;0) $ & $j=84\;\rightarrow\;(1,\;\mathbf{5},\; 7,\;3,\;0) $ & $j=100\;\rightarrow\;(1,\;\mathbf{5},\; 7,\;3,\;0)$\\
$j=69\;\rightarrow\;(1,\;\mathbf{5},\; 7,\;3,\;0) $ & $j=85\;\rightarrow\;(1,\;\mathbf{5},\; 7,\;3,\;0) $ & $j=101\;\rightarrow\;(1,\;\mathbf{5},\; 7,\;3,\;0)$\\
$j=70\;\rightarrow\;(1,\;\mathbf{5},\; 7,\;3,\;0) $ & $j=86\;\rightarrow\;(1,\;\mathbf{5},\; 7,\;3,\;0) $ & $j=102\;\rightarrow\;(1,\;\mathbf{5},\; 7,\;3,\;0)$\\
$j=71\;\rightarrow\;(1,\;\mathbf{4},\; 5,\;2,\;0) $ & $j=87\;\rightarrow\;(1,\;\mathbf{4},\; 5,\;2,\;0) $ & $j=103\;\rightarrow\;(1,\;\mathbf{4},\; 5,\;2,\;0)$\\
$j=72\;\rightarrow\;(1,\;\mathbf{5},\; 7,\;3,\;0) $ & $j=88\;\rightarrow\;(1,\;\mathbf{5},\; 7,\;3,\;0) $ & $j=104\;\rightarrow\;(1,\;\mathbf{5},\; 7,\;3,\;0)$\\
$j=73\;\rightarrow\;(1,\;\mathbf{5},\; 7,\;3,\;0) $ & $j=89\;\rightarrow\;(1,\;\mathbf{5},\; 7,\;3,\;0) $ & $j=105\;\rightarrow\;(1,\;\mathbf{5},\; 7,\;3,\;0)$\\
$j=74\;\rightarrow\;(1,\;\mathbf{5},\; 7,\;3,\;0) $ & $j=90\;\rightarrow\;(1,\;\mathbf{5},\; 7,\;3,\;0) $ & $j=106\;\rightarrow\;(1,\;\mathbf{5},\; 7,\;3,\;0)$\\
$j=75\;\rightarrow\;(1,\;\mathbf{4},\; 5,\;2,\;0) $ & $j=91\;\rightarrow\;(1,\;\mathbf{4},\; 5,\;2,\;0) $ & $j=107\;\rightarrow\;(1,\;\mathbf{4},\; 5,\;2,\;0)$\\
$j=76\;\rightarrow\;(1,\;\mathbf{4},\; 6,\;3,\;0) $ & $j=92\;\rightarrow\;(1,\;\mathbf{4},\; 6,\;3,\;0) $ & $j=108\;\rightarrow\;(1,\;\mathbf{4},\; 6,\;3,\;0)$\\
$j=77\;\rightarrow\;(1,\;\mathbf{4},\; 6,\;3,\;0) $ & $j=93\;\rightarrow\;(1,\;\mathbf{4},\; 6,\;3,\;0) $ & $j=109\;\rightarrow\;(1,\;\mathbf{4},\; 6,\;3,\;0)$\\
$j=78\;\rightarrow\;(1,\;\mathbf{4},\; 6,\;3,\;0) $ & $j=94\;\rightarrow\;(1,\;\mathbf{4},\; 6,\;3,\;0) $ & $j=110\;\rightarrow\;(1,\;\mathbf{4},\; 6,\;3,\;0)$\\
$j=79\;\rightarrow\;(1,\;\mathbf{3},\; 3,\;1,\;0) $ & $j=95\;\rightarrow\;(1,\;\mathbf{3},\; 3,\;1,\;0) $ & $j=111\;\rightarrow\;(1,\;\mathbf{3},\; 3,\;1,\;0)$\\
$j=80\;\rightarrow\;(1,\;\mathbf{5},\; 5,\;1,\;0) $ & $j=96\;\rightarrow\;(1,\;\mathbf{5},\; 6,\;2,\;0) $ & $j=112\;\rightarrow\;(1,\;\mathbf{5},\; 6,\;2,\;0)$\\
                                                    &                                                     & \vdots \\ 
\hline
		\end{tabular}
 \end{table}
 \vspace{3mm}
 
 Note that the complete intersection happens at $j=79$, $j=79+T=79+16=95$ and $j=79+2T=29+32=111$.
 
\vspace{3mm}
\newpage
\section{Example $3$}
\vspace{2mm}

Let  $\mathbf{a}=(a,b,c)=(a,a+c,c)=(3,5,2)$. The leading member of the family is:
$$ \mathbf{\underline{a}}=\left(1,\;1+a,\;1+a+b,\;1+a+b+c\right)=(1,\;4,\;9,\;11)$$
 and:
$$ \mathbf{\underline{a}_j}=(j,\;4+j,\;9+j,\;11+j)\;\;\;\mbox{where}\;\;\;j\in \N_{\geq 1}.$$
\vspace{1mm}

For each $j$, we compute the ranks of the free modules in the minimal free resolution of $P_{\mathbf{\underline{a}_j}}$.
  Then by analyzing the results obtaining by running the Macaulay $2$ computer program we get the period $T$ starts at $j=32$ and $T=a+b+c=10$:
\vspace{5mm}
 
\begin{table}[h!]
\caption{Example $3$} 
	\centering
		\begin{tabular}{|c|c|c|}
		\hline
		\vdots                                           &		                                                  &\\
$j=32\;\rightarrow\;(1,\;\mathbf{10},\; 16,\;7,\;0)$ & $j=42\;\rightarrow\;(1,\;\mathbf{10},\; 16,\;7,\;0) $ & $j=52\;\rightarrow\;(1,\;\mathbf{10},\; 16,\;7,\;0)$\\
$j=33\;\rightarrow\;(1,\;\;\mathbf{8},\; 12,\;5,\;0)$  & $j=43\;\rightarrow\;\;(1,\;\mathbf{8},\; 12,\;5,\;0) $ & $j=53\;\rightarrow\;\;(1,\;\mathbf{8},\; 12,\;5,\;0)$\\
$j=34\;\rightarrow\;(1,\;\mathbf{10},\; 16,\;7,\;0) $ & $j=44\;\rightarrow\;(1,\;\mathbf{10},\; 17,\;8,\;0) $ & $j=54\;\rightarrow\;(1,\;\mathbf{10},\; 17,\;8,\;0)$\\
$j=35\;\rightarrow\;(1,\;\;\mathbf{9},\; 14,\;6,\;0) $ & $j=45\;\rightarrow\;\;(1,\;\mathbf{9},\; 14,\;6,\;0) $ & $j=55\;\rightarrow\;\;(1,\;\mathbf{9},\; 14,\;6,\;0)$\\
$j=36\;\rightarrow\;(1,\;\;\mathbf{9},\; 15,\;7,\;0) $ & $j=46\;\rightarrow\;\;(1,\;\mathbf{9},\; 15,\;7,\;0) $ & $j=56\;\rightarrow\;\;(1,\;\mathbf{9},\; 15,\;7,\;0)$\\
$j=37\;\rightarrow\;(1,\;\;\mathbf{8},\; 12,\;5,\;0) $ & $j=47\;\rightarrow\;\;(1,\;\mathbf{8},\; 12,\;5,\;0) $ & $j=57\;\rightarrow\;\;(1,\;\mathbf{8},\; 12,\;5,\;0)$\\
$j=38\;\rightarrow\;(1,\;\mathbf{10},\; 17,\;8,\;0) $ & $j=48\;\rightarrow\;(1,\;\mathbf{10},\; 17,\;8,\;0) $ & $j=58\;\rightarrow\;(1,\;\mathbf{10},\; 17,\;8,\;0)$\\
$j=39\;\rightarrow\;(1,\;\;\mathbf{8},\; 12,\;5,\;0) $ & $j=49\;\;\rightarrow\;(1,\;\mathbf{8},\; 12,\;5,\;0) $ & $j=59\;\rightarrow\;\;(1,\;\mathbf{8},\; 12,\;5,\;0)$\\
$j=40\;\rightarrow\;(1,\;\;\mathbf{9},\; 15,\;7,\;0) $ & $j=50\;\rightarrow\;\;(1,\;\mathbf{9},\; 15,\;7,\;0) $ & $j=60\;\rightarrow\;\;(1,\;\mathbf{9},\; 15,\;7,\;0)$\\
$j=41\;\rightarrow\;(1,\;\;\mathbf{9},\; 14,\;6,\;0) $ & $j=51\;\rightarrow\;\;(1,\;\mathbf{9},\; 14,\;6,\;0) $ & $j=61\;\rightarrow\;\;(1,\;\mathbf{9},\; 14,\;6,\;0)$\\
                                                    &                                                     & \vdots \\ 
\hline
		\end{tabular}
 \end{table}

Note that, in this case, the hypotheses of Theorem $1$ fail and so its conclusion is false. 

\vspace{5mm}

\begin{oss}In general,  it looks like that for $\;(a,b,c)=(a,\;p(b+c),\;c)$ we do not have a complete intersection ideal in the period.
\end{oss}
\vspace{3mm}

It would be interesting to prove the Herzog-Srinivasan conjecture for any given number $a,b,c$ in the case $m=4$, and it would be fascinating to analyze what happens for $m>4$.
\newpage
\addcontentsline{toc}{chapter}{Bibliografy}

\end{document}